\documentclass[11pt,reqno]{amsart}

\usepackage{amsfonts}
\usepackage{amssymb}
\usepackage{amsthm}
\usepackage{amsmath}

\theoremstyle{plain}
 \newtheorem{theorem}{\bf Theorem}[section]
 \newtheorem{lemma}[theorem]{Lemma}
 \newtheorem{corollary}[theorem]{Corollary}
 \newtheorem{claim}[theorem]{Claim}
\theoremstyle{definition}
 \newtheorem{definition}[theorem]{Definition}

\theoremstyle{remark}

\numberwithin{equation}{section}


\newcommand{\TS}{\textstyle}


\renewcommand{\a}{\alpha}
\renewcommand{\b}{\beta}
\newcommand{\g}{\gamma}
\renewcommand{\d}{\delta}
\renewcommand{\k}{\kappa}

\renewcommand{\o}{\omega}

\newcommand{\ps}{\emptyset}
\newcommand{\sse}{\subseteq}
\newcommand{\f}{\Vdash}
\newcommand{\sledi}{\Rightarrow}
\newcommand{\rest}{\upharpoonright}
\newcommand{\akko}{\Leftrightarrow}


\newcommand{\cp}{\mathcal P}
\newcommand{\cg}{\mathcal G}

\newcommand{\cd}{\mathcal D}


\newcommand{\seq}[1]{\left<#1\right>}
\newcommand{\set}[1]{\left\{#1\right\}}
\newcommand{\abs}[1]{\left\vert#1\right\vert}


\newcommand{\ran}{\operatorname{ran}}

\newcommand{\trcl}{\operatorname{trcl}}
\newcommand{\rank}{\operatorname{rank}}

\newcommand{\cf}{\operatorname{cof}}

\newcommand{\tr}{\operatorname{Tr}}
\newcommand{\supp}{\operatorname{supp}}
\newcommand{\cl}{\operatorname{cl}}
\renewcommand{\int}{\operatorname{int}}
\newcommand{\ord}{\operatorname{Ord}}
\newcommand{\izo}{\overset{\cong}{\longrightarrow}}


\author[B. Kuzeljevic]{Borisa Kuzeljevic}
\address{Mathematical Institute SANU, Kneza Mihaila 36, 11001 Belgrade, Serbia}
\curraddr{}
\email{borisa@mi.sanu.ac.rs}
\urladdr{}
\dedicatory{}

\author[S. Todorcevic]{Stevo Todorcevic}
\address{Department of Mathematics, University of Toronto, Toronto, Canada, M5S 2E4. Institut de Math\'ematiques de Jussieu, UMR 7586, 2 pl. Jussieu, Case 7012, 75251 Paris Cedex 05, France. Mathematical Institute SANU, Kneza Mihaila 36, 11001 Belgrade, Serbia}
\curraddr{}
\email{stevo@math.toronto.edu, stevo.todorcevic@imj-prg.fr,
stevo@mi.sanu.ac.rs}
\urladdr{}
\dedicatory{}

\title[Forcing with matrices of countable elementary submodels]{Forcing with matrices of countable elementary submodels}
\keywords{Proper forcing, side condition, countable elementary submodel}
\subjclass[2010]{03E57}
\thanks{ This paper was conceived during the Summer of 2014 when the first author was visiting Institut de Math\'ematiques de Jussieu. The first author would like to thank
the Equipe de Logique of that Institute for support which made this visit possible}
\date{\today}

\begin{document}

\begin{abstract}
We analyze the forcing notion $\mathcal P$ of finite matrices whose rows consists of isomorphic countable elementary submodels of a given structure of the form $H_{\theta}$. We show that forcing with this poset adds a Kurepa tree $T$. Moreover, if $\mathcal P_c$ is a suborder of $\mathcal P$ containing only continuous matrices, then the Kurepa tree $T$ is almost Souslin, i.e. the level set of any antichain in $T$ is not stationary in $\omega_1$.
\end{abstract}

\maketitle

\section{Introduction}

In this paper we analyze the forcing notion mentioned in the remark on page 217 of \cite{pfa}.  This is the forcing notion of finite  matrices
whose rows consists of isomorphic countable elementary submodels of a given structure  of the form $H_\theta.$ In \cite{pfa} they were merely
meant as side conditions to various proper forcing constructions when one is interested in getting the $\aleph_2$-chain condition that can be iterated.
Soon afterwards the second author realized that this variation of the original side conditions is as much an interesting forcing notion as the poset of
finite chains of countable elementary submodels analyzed briefly in Theorem 6 of \cite{pfa}. For example,  he showed that the poset of finite matrices
of row-isomorphic countable elementary sub-models always forces CH (so, in particular preserves CH if it is true in the ground model). The second author observed
at the same time that this forcing notion gives a natural example of a Kurepa tree. Here we shall explore this further and produce a natural variation of this forcing notion
that gives us a Kurepa tree with no stationary antichains.  This gives us a quite different forcing construction of such a tree from the previous ones which use
countable rather than finite conditions (see  \cite{golshani} and \cite{note}). We believe that there will be other natural variations of this forcing notion with interesting applications. For example, we note that  in the recent paper \cite{aspero}  Aspero and Mota have used the poset of  finite matrices of elementary submodels  to control their iteration scheme  which shows that the forcing axiom for the class of all finitely proper posets of size $\o_1$ is compatible with $2^{\aleph_0}>\aleph_2$.
In view of the recent efforts to generalize the side condition method to higher cardinals (see \cite{neeman,neemanslides}) it would be interesting to also explore the possible higher-cardinal analogues of the
posets that we analyze here. This could also be asked for the original side-condition poset of finite elementary chains of countable elementary submodels of \cite{pfa} which, as shown in Theorem 6
of \cite{pfa},  gives us a natural forcing notion that collapses a given cardinal $\theta$ to $\omega_1$ preserving all other cardinals\footnote{It should be noted that the hypothesis of Theorem 6 of \cite{pfa} assumes that
there is a stationary subset $S$ of $[\theta]^{\aleph_0}$ or cardinality $\theta$,  a condition that is satisfied by many cardinals and in particular by all cardinals of uncountable cofinality if $0^\sharp$ does not exist }. As far as we know, no higher-cardinal analogue of this poset has been
produced. Some research related to this have been recently produced by Aspero \cite{aspero1}.

\subsection*{Acknowledgements:} The authors would like to thank the referee for careful reading the paper and many useful suggestions.

\section{Preliminaries}\label{matrix}

\begin{definition}\label{d1}
Let $\theta\ge \o_2$ be a regular cardinal. By $H_\theta$ we denote the collection of all sets whose transitive closure has cardinality $<\theta.$ We consider it as a model of the form $(H_\theta, \in, <_\theta)$ where
$<_\theta$ is some fixed well-ordering of $H_\theta$ that will not be explicitly mentioned.  The partial order $\cp$ is the set of all functions $p:\o_1\to H_{\theta}$ satisfying:
\begin{enumerate}
\item $\supp(p)=\set{\a<\o_1:p(\a)\neq \ps}$ is a finite set;
\item $p(\a)$ is a finite collection of isomorphic countable elementary submodels of $H_{\theta}$ for every $\a\in\supp(p)$;
\item for each $\a,\b\in\supp(p)$ if $\a<\b$ then $\forall M\in p(\a)\ \exists N\in p(\b)\ M\in N$;
\end{enumerate}

\noindent The ordering on $\cp$ is given by:
\begin{equation}\label{eq1}
p\le q\ \akko\ \forall \a<\o_1\ \ q(\a)\sse p(\a).
\end{equation}
\end{definition}

\noindent
The fact that $M$ is a countable elementary submodel of $\seq{H_{\theta},\in}$ will be denoted by $M\prec H_{\theta}$. Also, if $M\prec H_{\theta}$, then $\overline M\in H_{\o_1}$ denotes the transitive collapse of $M$ with $\pi_M$ being the corresponding isomorphism. For $p\in \cp$ and $\a\in \supp(p)$ we denote $\delta_{\alpha}^p=M\cap \o_1$ where $M$ is some (any) model in $p(\a)$. Also, if $M\prec H_{\theta}$, then $\d_M$ will denote the ordinal $M\cap \o_1$. We list some standard  lemmas concerning countable elementary submodels of $H_{\theta}$ that will be useful throughout  the paper.

\begin{lemma}\label{t20}
Let $F$ be a countable subset of $H_{\theta}$. Then the set of all ordinals of the form $M\cap \o_1$ such that $M$ is a countable elementary submodel of $H_{\theta}$ with $F\sse M$ contains a club.
\end{lemma}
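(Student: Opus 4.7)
The plan is to exhibit a club $C\subseteq\o_1$ all of whose points have the form $M\cap\o_1$ for some countable $M\prec H_\theta$ containing $F$. I will produce such a $C$ as the range of a strictly increasing continuous function $\o_1\to\o_1$ obtained by tracking $\d_M$ along a continuous $\in$-chain of countable elementary submodels.

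First, using the downward L\"owenheim--Skolem theorem, pick a countable $M_0\prec H_\theta$ with $F\cup\{F\}\sse M_0$; this is possible since $F$ is countable and $H_\theta$ is large enough to interpret all the needed Skolem functions. Recursively, given $M_\a$, choose a countable $M_{\a+1}\prec H_\theta$ with $M_\a\cup\{M_\a\}\sse M_{\a+1}$. At a limit ordinal $\l<\o_1$, set $M_\l=\bigcup_{\a<\l}M_\a$; since the union of a countable increasing elementary chain of countable elementary submodels is again a countable elementary submodel (Tarski--Vaught test), $M_\l\prec H_\theta$, and clearly $F\sse M_0\sse M_\l$.

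Now define $f:\o_1\to\o_1$ by $f(\a)=M_\a\cap\o_1=\d_{M_\a}$. Continuity at limit stages is immediate from $M_\l=\bigcup_{\a<\l}M_\a$. Strict monotonicity at successor stages uses that $M_\a\in M_{\a+1}$ (not merely $M_\a\sse M_{\a+1}$) together with countability: since $M_\a$ is countable, $M_{\a+1}$ elementarily sees a bijection between $M_\a$ and $\o$, hence $M_\a\cap\o_1\in M_{\a+1}$ and therefore $\d_{M_\a}<\d_{M_{\a+1}}$. Thus $f$ is strictly increasing and continuous, so $C=\ran(f)$ is a club in $\o_1$; and by construction every $\d\in C$ is witnessed by a countable $M_\a\prec H_\theta$ with $F\sse M_\a$.

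The argument is entirely standard and I do not foresee a genuine obstacle; the only small point requiring care is the justification of strict monotonicity of $f$, which crucially exploits the $\in$-step $M_\a\in M_{\a+1}$ provided by the recursion, rather than mere inclusion.
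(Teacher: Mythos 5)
Your proof is correct: the paper states Lemma \ref{t20} without proof, as one of the standard facts about countable elementary submodels, and your argument is exactly the standard one it implicitly relies on — build a continuous $\in$-increasing chain $\seq{M_\a:\a<\o_1}$ of countable elementary submodels with $F\sse M_0$ and observe that $\a\mapsto M_\a\cap\o_1$ is normal, so its range is a club consisting of ordinals of the required form. The one point needing care, strict increase at successors, you handle correctly: $M_\a\in M_{\a+1}$ and the countability of $M_\a$ give $M_\a\cap\o_1\in M_{\a+1}$ (it is definable from the parameter $M_\a$, cf.\ Lemma \ref{t25}), hence $M_\a\cap\o_1<M_{\a+1}\cap\o_1$.
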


\begin{lemma}\label{t23}
If $M\prec H_{\theta}$ contains some element $X$, then $X$ is countable if and only if $X\sse M$.
\end{lemma}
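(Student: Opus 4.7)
The plan is to prove the two directions separately, with the reverse implication being essentially trivial and essentially all the work going into the forward direction.

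For the easy direction, suppose $X \subseteq M$. Since $M \prec H_\theta$ is by hypothesis a countable elementary submodel, $M$ itself is a countable set, so $X$, being a subset of $M$, is countable as well. This disposes of one direction in a single line.

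For the forward direction, assume $X \in M$ and $X$ is countable. My first step is to observe that the statement ``there exists a surjection $f \colon \omega \to X$'' is expressible in the structure $\langle H_\theta, \in \rangle$ (with $X$ as a parameter) and holds in $H_\theta$, since any witnessing surjection lives in $H_\theta$ by the hypothesis $\theta \geq \omega_2$. By the elementarity of $M$ in $H_\theta$, this existential statement is reflected into $M$, so I can fix some $f \in M$ with $f \colon \omega \to X$ surjective. Note that $\omega \in M$ since $\omega$ is definable without parameters in $H_\theta$.

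Next I need $\omega \subseteq M$, which I would establish by an easy induction: $0 = \emptyset \in M$ because it is definable, and whenever $n \in M$ the successor $n \cup \{n\}$ is definable from $n$ and hence belongs to $M$ by elementarity. Having $\omega \subseteq M$ in hand, for any $x \in X$ pick $n \in \omega$ with $f(n) = x$; then $n \in M$ and $f \in M$, so elementarity applied to the formula ``$y$ is the value of $f$ at $n$'' (with parameters $f, n$) yields $x = f(n) \in M$. Hence $X \subseteq M$, completing the proof.

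I do not expect any real obstacle here — this is a textbook fact about countable elementary submodels of $H_\theta$. The only point where one must be a little careful is verifying $\omega \subseteq M$, which one might be tempted to take for granted; handling that step cleanly via induction on the definable successor function is the main conceptual content.
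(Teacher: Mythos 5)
Your proof is correct: the paper states Lemma \ref{t23} without proof, as one of several standard facts about countable elementary submodels of $H_{\theta}$, and your argument (reflecting a surjection $f:\omega\to X$ into $M$ by elementarity, noting $\omega\sse M$, and pulling each $f(n)$ into $M$) is exactly the standard argument the paper implicitly relies on. No gaps; the care you take with $\omega\sse M$ is fine, though one could equally note that every natural number is definable without parameters in $H_\theta$.
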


\begin{lemma}\label{t24}
If $M\prec H_{\theta}$ contains as an element some subset $A$ of $\o_1$, then $A$ is uncountable if and only if $A\cap M\cap \o_1$ is unbounded in $M\cap \o_1$.
\end{lemma}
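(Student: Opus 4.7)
The plan is to prove the two implications separately, in both cases exploiting elementarity of $M$ together with the preceding Lemma \ref{t23}. The background fact that $M\cap\o_1$ is itself an ordinal (a standard consequence of transitivity of $\o_1$ and downward absoluteness of ``being an ordinal'') will be used freely so that membership in $M\cap\o_1$ is the same as being an element of $M$ below $\o_1$.

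For the forward direction, I would assume $A$ is uncountable and appeal to the fact, true in $H_\theta$, that every uncountable subset of $\o_1$ is unbounded in $\o_1$. Since $A\in M$ and $\o_1\in M$, elementarity lets $M$ recognise that $A$ is unbounded in $\o_1$. Given any $\a\in M\cap\o_1$, I can therefore find inside $M$ some $\b\in A$ with $\b>\a$; this $\b$ lies in $M$ and is a countable ordinal, so $\b\in M\cap\o_1$, and hence $\b\in A\cap M\cap\o_1$ witnesses unboundedness at level $\a$.

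For the converse I would argue contrapositively. Assume $A$ is countable; then by Lemma \ref{t23} we have $A\sse M$, so $A\sse M\cap\o_1$. Being a countable subset of $\o_1$, $A$ is bounded in $\o_1$, and by elementarity the statement ``there exists $\g<\o_1$ with $A\sse \g$'' reflects into $M$, producing such a $\g$ in $M\cap\o_1$. This bounds $A$ strictly below $M\cap\o_1$, so $A\cap M\cap\o_1 = A$ is bounded in $M\cap\o_1$, as required.

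I do not anticipate any real obstacle: both directions are one-step elementarity arguments, and the only mildly delicate point — confirming that a witness obtained inside $M$ and below $\o_1$ genuinely lies in the ordinal $M\cap\o_1$ — is handled by the standard fact about $M\cap\o_1$ mentioned above. The proof is essentially a template application of the toolkit built in Lemmas \ref{t20} and \ref{t23}.
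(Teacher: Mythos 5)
Your argument is correct: the forward direction via reflecting ``$A$ is unbounded in $\o_1$'' into $M$, and the contrapositive of the converse via Lemma \ref{t23} and a bound $\g\in M\cap\o_1$, is exactly the standard proof. The paper states Lemma \ref{t24} without proof (it is listed among the standard facts about countable elementary submodels), so there is nothing further to compare; your write-up fills that gap correctly.
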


\begin{lemma}\label{t25}
If $M\prec H_{\theta}$, $X$ is in $H_{\theta}$ and $X$ is definable from parameters in $M$, then $X\in M$.
\end{lemma}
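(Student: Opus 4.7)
The plan is to use elementarity applied to the defining formula together with the uniqueness clause built into ``definable.''  Unpacking the hypothesis, we have a formula $\varphi(x,y_1,\dots,y_n)$ in the language of $(H_\theta,\in,<_\theta)$ and parameters $a_1,\dots,a_n\in M$ such that $X$ is the unique element of $H_\theta$ satisfying $\varphi(X,a_1,\dots,a_n)$.  The first step is to rewrite this fact as the sentence
\[
H_\theta\models \exists !x\,\varphi(x,a_1,\dots,a_n),
\]
where $\exists !x$ is an abbreviation for the usual ``there exists a unique $x$'' formula.  This is a single first-order sentence with all parameters in $M$.

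Next I invoke elementarity.  Since $M\prec H_\theta$ and every parameter lies in $M$, the same sentence holds in $M$:
\[
M\models \exists !x\,\varphi(x,a_1,\dots,a_n).
\]
In particular, $M\models \exists x\,\varphi(x,a_1,\dots,a_n)$, so there is some witness $X'\in M$ with $M\models \varphi(X',a_1,\dots,a_n)$.  Applying elementarity once more (this time in the other direction) gives $H_\theta\models \varphi(X',a_1,\dots,a_n)$.

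Finally I use uniqueness: in $H_\theta$ the only object satisfying $\varphi(\,\cdot\,,a_1,\dots,a_n)$ is $X$ itself, so $X'=X$.  Since $X'\in M$, this yields $X\in M$, as required.  There is no real obstacle here; the only subtlety worth naming is making sure the well-ordering $<_\theta$ of $H_\theta$ is available in the language so that Skolem/definability arguments go through inside $M$, and this is exactly why $<_\theta$ was included in the signature of $H_\theta$ in Definition~\ref{d1}.
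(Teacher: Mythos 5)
Your proof is correct: it is the standard elementarity argument (uniqueness of the defined object, witness extraction inside $M$, then transfer back to $H_\theta$), which is exactly the argument the paper implicitly relies on, since it states Lemma~\ref{t25} without proof as a standard fact about countable elementary submodels.
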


\begin{lemma}\label{t26}
Let $\seq{N_{\xi}:\xi<\o_1}$ be a continuous $\in$-chain of countable elementary submodels of $H_{\theta}$. Then $\set{\xi<\o_1:N_{\xi}\cap \o_1=\xi}$ is a club in $\o_1$.
\end{lemma}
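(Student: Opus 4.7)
The plan is to recast the conclusion as the statement that a certain function from $\o_1$ to itself has a club of fixed points. Specifically, define $f\colon \o_1 \to \o_1$ by $f(\xi) = N_\xi \cap \o_1$. First I would verify this is well defined: since $N_\xi$ is a countable elementary submodel of $H_\theta$, the set $N_\xi \cap \o_1$ is a set of countable ordinals that is transitive (if $\alpha \in N_\xi \cap \o_1$, then $\alpha$ is countable, so by Lemma \ref{t23} applied to $\alpha \in N_\xi$ one gets $\alpha \sse N_\xi$), and hence is itself a countable ordinal.

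Next I would establish two structural properties of $f$. For strict monotonicity, let $\xi < \eta$. By hypothesis $N_\xi \in N_\eta$, and $N_\xi \cap \o_1$ is definable from $N_\xi$ and $\o_1$, both of which lie in $N_\eta$; hence Lemma \ref{t25} gives $N_\xi \cap \o_1 \in N_\eta$. Since $f(\xi) = N_\xi \cap \o_1$ is itself a countable ordinal, this shows $f(\xi) \in N_\eta \cap \o_1 = f(\eta)$, i.e.\ $f(\xi) < f(\eta)$. For continuity at a limit $\xi$, the continuity of the $\in$-chain gives $N_\xi = \bigcup_{\eta<\xi} N_\eta$, so
\[
f(\xi) = N_\xi \cap \o_1 = \bigcup_{\eta<\xi} (N_\eta \cap \o_1) = \sup_{\eta<\xi} f(\eta).
\]

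To finish, I would invoke the standard fact that any strictly increasing continuous function $f\colon \o_1 \to \o_1$ has a club of fixed points: closure is immediate from continuity, while unboundedness follows by picking any $\xi_0 < \o_1$, iterating $\xi_{n+1} = f(\xi_n)$, and noting that $\xi = \sup_n \xi_n < \o_1$ satisfies $f(\xi) = \sup_n f(\xi_n) = \sup_n \xi_{n+1} = \xi$ by continuity. The fixed point set of $f$ is exactly $\set{\xi<\o_1 : N_\xi \cap \o_1 = \xi}$, so this set is a club.

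The only step requiring a moment of care is strict monotonicity, where one must pass through elementarity (Lemma \ref{t25}) rather than settling for the weak inequality $N_\xi \sse N_\eta$ that follows directly from $N_\xi \in N_\eta$ via Lemma \ref{t23}. Everything else is a routine unpacking of continuity and countability.
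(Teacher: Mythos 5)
Your proof is correct; the paper in fact states Lemma \ref{t26} without proof, as one of the standard facts about countable elementary submodels, and your argument is precisely the standard one: the map $\xi\mapsto N_\xi\cap\o_1$ is a normal (strictly increasing, continuous) function on $\o_1$, hence its fixed points form a club. The only gloss worth tightening is the phrase ``by hypothesis $N_\xi\in N_\eta$'': from an $\in$-chain one literally gets $N_\xi\in N_{\xi+1}$, and then $N_\xi\in N_\eta$ for $\xi<\eta$ follows since each $N_{\gamma}$, being countable and an element of $N_{\gamma+1}$, is a subset of it (Lemma \ref{t23}), with unions handling limits.
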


\begin{lemma}\label{t9}
Let $M_0$ and $M_1$ be isomorphic countable elementary submodels of some $H_{\theta}$. Let $L_0=M_0\cap \o_2$ and $L_1=M_1\cap \o_2$. Then $L_0\cap L_1$ is an initial segment of both $L_0$ and $L_1$.
\end{lemma}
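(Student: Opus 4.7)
The plan is to exploit a single canonical bijection $\o_1\to\a$, living simultaneously in both $M_0$ and $M_1$, and use it to prove the slightly stronger statement that $L_0\cap\a = L_1\cap\a$ for every $\a\in L_0\cap L_1$; the lemma is then a one-line consequence. The first preparatory step is to observe that $\d_{M_0}=\d_{M_1}$, and to denote this common value by $\d$. For this, note that each $M_i\cap\o_1$ is downward closed in $\o_1$ by Lemma \ref{t23} applied to the countable ordinals in $M_i$, hence coincides with the ordinal $\d_{M_i}$; moreover $\pi_{M_i}(\o_1)=\d_{M_i}$, since $\pi_{M_i}$ is the identity on ordinals below $\d_{M_i}$. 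The hypothesis $M_0\cong M_1$ then forces $\overline{M_0}=\overline{M_1}$, so $\pi_{M_0}(\o_1)=\pi_{M_1}(\o_1)$, i.e. $\d_{M_0}=\d_{M_1}$. This is the one place where the isomorphism hypothesis is really needed; without it the argument below would give only one-sided initial-segment inclusions.

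For the core equality, fix $\a\in L_0\cap L_1$. If $\a<\o_1$ then $\a<\d$, and Lemma \ref{t23} puts every ordinal below $\a$ into both $M_0$ and $M_1$, so $L_0\cap\a=\a=L_1\cap\a$. If instead $\o_1\le\a<\o_2$, I intend to use the $<_\theta$-least bijection $f_\a:\o_1\to\a$. Being definable from $\a$, $f_\a$ lies in $M_0\cap M_1$ by Lemma \ref{t25}. I would then show $L_0\cap\a = f_\a[\d]$ by two routine elementarity arguments: if $\xi<\d$ then $\xi\in M_0$ and $f_\a\in M_0$, so $f_\a(\xi)\in M_0\cap\a$; conversely, for $\g\in L_0\cap\a$ the unique ordinal $\xi<\o_1$ with $f_\a(\xi)=\g$ is definable from $f_\a$ and $\g$ inside $M_0$, hence lies in $M_0\cap\o_1=\d$. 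Repeating the same argument with $M_1$ in place of $M_0$ yields $L_1\cap\a=f_\a[\d]$, and therefore $L_0\cap\a=L_1\cap\a$.

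Once this core equality is in hand the lemma is immediate. Given $\a\in L_0\cap L_1$ and $\b\in L_0$ with $\b<\a$, one has $\b\in L_0\cap\a = L_1\cap\a\sse L_1$, so $\b\in L_0\cap L_1$; the symmetric statement for $L_1$ follows by swapping the roles of $M_0$ and $M_1$. Thus $L_0\cap L_1$ is an initial segment of each $L_i$. The only real obstacle is the identification $\d_{M_0}=\d_{M_1}$; everything else is routine elementarity combined with the existence, guaranteed by Lemma \ref{t25}, of a canonical witness to $|\a|=\o_1$ that sits inside every countable elementary submodel containing $\a$.
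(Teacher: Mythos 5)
Your proof is correct and follows essentially the same route as the paper: both arguments transfer an ordinal $\a<\o_2$ between the two models via an injection of $\a$ into $\o_1$ that lies in $M_0\cap M_1$, together with the observation that $M_0\cong M_1$ forces $M_0\cap\o_1=M_1\cap\o_1$. The only (cosmetic) difference is that the paper fixes a family $\seq{e_\g:\g<\o_2}$ of injections assumed ``without loss of generality'' to lie in both models, whereas you invoke the $<_\theta$-least bijection, which is definable from $\a$ alone and hence automatically in both models by Lemma \ref{t25} --- a slightly cleaner justification of the same step.
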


\begin{proof}
Without loss of generality, we can assume that both $M_0$ and $M_1$ contain the same family of mappings $\seq{e_{\g}:\g<\o_2}$ where each map $e_{\g}:\g\to \o_1$ is 1-1. Now we will prove that if $\b$ is in $L_0\cap L_1$ and $\a<\b$ is in $L_0$ then $\a\in L_1$. Consider the map $e_{\b}:\b\to\o_1$. Then, in $M_0$ there is some $\xi<\o_1$ such that $e_{\b}(\a)=\xi$, i.e. $\a=e_{\b}^{-1}(\xi)$. But $M_1$ knows both $e_{\b}$ and $\xi$ (this is because $M_0\cong M_1\sledi M_0\cap \o_1=M_1\cap \o_1$). Hence, $\a\in M_1\cap \o_2=L_1$.
\end{proof}

If $\cg\sse \cp$ is a filter in $\mathcal P$ generic over $V$, then we define $G:\o_1\to H_{\theta}$ as the function satisfying
\[\TS
G(\a)=\set{M\prec H_{\theta}: \exists p\in \cg\ \mbox{such that}\ M\in p(\a)}.
\]
Note that $G$ is a well defined function because $\cg$ is a filter. For $\a$ in the domain of $G$ we denote $\d_{\a}=M\cap \o_1$ for some (any) $M$ in $G(\a)$. Further, we denote $A_{\gamma}=\bigcup_{M\in G(\gamma)}M\cap \o_2$ for $\g<\o_1$, and note that if $\g<\delta$, then $A_{\gamma}\sse A_{\delta}$. Also, we define the function $g:\o_1\to H_{\o_1}$ with $g(\a)=\overline M$ for some (any) model $M$ from $G(\a)$ and for $p\in \cp$, by $\bar p$ we define $\bar p:\o_1\to [H_{\o_1}]^{\o}$ as a function with the same support as $p$ which maps $\a\in\supp(\bar p)$ to the transitive collapse of some model from $p(\a)$, while for $\a\in \o_1\setminus \supp(\bar p)$ take $\bar p(\a)=\ps$.

\begin{lemma}\label{t14}
Let $p,q\in \cp$. If $\bar p=\bar q$, then $p$ and $q$ are compatible conditions.
\end{lemma}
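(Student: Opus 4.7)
The plan is to exhibit an explicit common extension of $p$ and $q$, namely the componentwise union $r$ defined by $r(\a)=p(\a)\cup q(\a)$ for every $\a<\o_1$. I would then verify that $r\in\cp$; the inequalities $r\le p$ and $r\le q$ are immediate from \eqref{eq1} once $r\in\cp$ is established, since $p(\a),q(\a)\sse r(\a)$ for every $\a$.

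First I would unpack the hypothesis $\bar p=\bar q$. Since $\bar p$ and $\bar q$ are stipulated to have the same supports as $p$ and $q$ respectively, equality forces $\supp(p)=\supp(q)$, which then also equals $\supp(r)$. Moreover, for every $\a$ in this common support, any model in $p(\a)$ and any model in $q(\a)$ collapses to the common set $\bar p(\a)=\bar q(\a)\in H_{\o_1}$, so any two models drawn from $p(\a)\cup q(\a)$ are isomorphic via $\pi_N^{-1}\circ\pi_M$. This already settles clauses (1) and (2) of Definition \ref{d1}: $\supp(r)$ is finite, each $r(\a)$ is a finite union of finite sets, and its members are pairwise isomorphic countable elementary submodels of $H_\theta$.

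The only clause that could a priori obstruct the argument is clause (3), but it reduces directly to the same clause for $p$ and for $q$: given $\a<\b$ in $\supp(r)$ and $M\in r(\a)$, if $M\in p(\a)$ then clause (3) for $p$ supplies $N\in p(\b)\sse r(\b)$ with $M\in N$, and the case $M\in q(\a)$ is symmetric. In particular, one never has to match a model from $p(\a)$ with a model from $q(\b)$ across the two conditions — the $\in$-chain requirement at level $\b$ is already witnessed inside $p$ or inside $q$ separately. The substance of the lemma thus lies entirely in the preliminary observation that the transitive-collapse invariant $\bar p$ carries enough information to make the componentwise union automatically a condition; there is no genuine obstacle in the verification itself, which I expect to occupy only a few lines.
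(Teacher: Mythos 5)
Your proof is correct and is essentially the paper's own argument: both observe that $\bar p=\bar q$ forces $\supp(p)=\supp(q)$ and that models with the same transitive collapse are isomorphic via $\pi_{N}^{-1}\circ\pi_{M}$, and then take $r(\a)=p(\a)\cup q(\a)$ as the common extension. You merely spell out the verification of clauses (1)--(3) of Definition \ref{d1}, which the paper leaves as ``clear.''
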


\begin{proof}
First note that if $\bar p=\bar q$, then $\supp(p)=\supp(q)$. Also, if two countable elementary submodels of $H_{\theta}$, say $M_1$ and $M_2$ have the same transitive collapse, then they are isomorphic (the isomorphism is simply $\pi_{M_2}^{-1}\circ\pi_{M_1}$). Now it is clear that the function $r:\o_1\to H_{\theta}$ defined by $r(\a)=p(\a)\cup q(\a)$ is in $\cp$ and that it satisfies $r\le p,q$.
\end{proof}

For $p,q\in \cp$ we will define their 'join' $p\lor q$ as the function from $\o_1$ to $H_{\theta}$ satisfying $(p\lor q)(\a)=p(\a)\cup q(\a)$ for $\a<\o_1$. Further, if a condition $q\in \cp$ and $M\prec H_{\k}$ ($\d_M=M\cap \o_1$) for $\k\ge\theta$ are given, it is clear what intersection $q\cap M$ represents, and we define the restriction of $q$ to $M$ as a function with finite support $q\mid M:\o_1\to H_{\theta}$ satisfying $\supp(q\mid M)=\supp(q)\cap\delta_M$ and for $\a<\d_M$:
\[\textstyle
(q\mid M)(\a)=\set{\varphi_{M'}(N): M'\in q(\d_M),\ \varphi_{M'}:M'\izo M\cap H_{\theta},\ N\in q(\a)\cap M'}.
\]
Note that the function $q\mid M$ is in $\cp$. We will also need the following notion which we call 'the closure of $p$ below $\d$'.

\begin{definition}\label{d2}
Let $p\in \cp$ and $\d\in \supp(p)$. Then $\cl_{\d}(p):\o_1\to H_{\theta}$ is a function such that $\supp(\cl_{\d}(p))=\supp(p)$ and $\cl_{\d}(p)(\g)=p(\g)$ for $\g\ge \d$, while for $\g<\d$ we have
\[\textstyle
\cl_{\d}(p)(\g)\!=\!\set{\psi_{N_1,N_2}(M):M\!\in\! p(\g)\cap N_1\ \!\&\!\ N_1,N_2\!\in\! p(\d)\ \!\&\!\ \psi_{N_1,N_2}:N_1\izo N_2}.
\]
\end{definition}

We will also need the following standard  lemmas later in the paper.

\begin{lemma}\label{t17}
Suppose that $\theta\ge \o_2$ is a regular cardinal. If $\cp\in H_{\theta}$ then in $V[\cg]$ we have $H_{\theta}[\cg]=\set{\int_{\cg}(\tau):\tau\in H_{\theta}\ \mbox{is a }\cp\mbox{-name}}=H_{\theta}^{V[\cg]}$.
\end{lemma}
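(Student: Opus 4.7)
Note first that the equality $H_{\theta}[\cg] = \set{\int_{\cg}(\tau) : \tau \in H_{\theta}\text{ is a }\cp\text{-name}}$ is essentially the definition of $H_{\theta}[\cg]$, so the content of the lemma lies in the second equality. A key preliminary observation is that since $\cp \in H_{\theta}$ we have $|\cp|^V < \theta$, so $\cp$ has the $\theta$-c.c.\ and therefore preserves all cardinals and cofinalities $\ge \theta$; in particular, an ordinal that is $<\theta$ in $V[\cg]$ is also $<\theta$ in $V$. For the inclusion $\set{\int_{\cg}(\tau) : \tau \in H_{\theta}} \sse H_{\theta}^{V[\cg]}$ I would argue by induction on the rank of $\tau \in H_{\theta}$: since $\trcl(\int_{\cg}(\tau)) \sse \set{\int_{\cg}(\sigma) : \sigma \in \trcl(\tau)}$, the inductive hypothesis yields $|\trcl(\int_{\cg}(\tau))|^{V[\cg]} \le |\trcl(\tau)|^V < \theta$.

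The substantive direction is the reverse inclusion. The plan is an induction on rank in $V[\cg]$: given $x \in H_{\theta}^{V[\cg]}$, enumerate $x = \set{y_{\beta} : \beta < \lambda}$ in $V[\cg]$ where $\lambda = |x|^{V[\cg]} < \theta$, and apply the inductive hypothesis to choose, for each $\beta < \lambda$, a name $\tau_{\beta} \in H_{\theta}^V$ with $\int_{\cg}(\tau_{\beta}) = y_{\beta}$. The difficulty is that the assignment $\beta \mapsto \tau_{\beta}$ lies in $V[\cg]$ rather than $V$. To overcome this I would invoke the standard $\theta$-c.c.\ covering lemma: for any $f \in V[\cg]$ with $f : \lambda \to X$, $X \in V$, $\lambda < \theta$, there is $F \in V$ with $F : \lambda \to [X]^{<\theta}$ and $f(\beta) \in F(\beta)$ for every $\beta < \lambda$. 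Applied with $X = H_{\theta}^V$ and $f(\beta) = \tau_{\beta}$, and setting $S = \bigcup_{\beta < \lambda} F(\beta)$, the regularity of $\theta$ gives $|S|^V < \theta$ and $\set{\tau_{\beta} : \beta < \lambda} \sse S$.

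It then suffices, working in $V$ with any $\cp$-name $\dot{x}$ for $x$, to put $\tau = \set{(\sigma, p) : \sigma \in S,\ p \in \cp,\ p \f \sigma \in \dot{x}}$. Since $|S|^V, |\cp|^V < \theta$ and $\theta$ is regular, $\tau \in H_{\theta}^V$; and $\int_{\cg}(\tau) = x$ because every $y_{\beta} \in x$ equals $\int_{\cg}(\tau_{\beta})$ with $\tau_{\beta} \in S$ and some condition in $\cg$ forcing $\tau_{\beta} \in \dot{x}$, while any $\sigma \in S$ contributing to $\int_{\cg}(\tau)$ via some $p \in \cg$ automatically satisfies $\int_{\cg}(\sigma) \in x$. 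The main obstacle is precisely the invocation of the covering lemma, which is the tool that transfers a $V[\cg]$-indexed family of names into a single object of $V$; once that step is available, the rank-induction and the choice of $\tau$ are routine bookkeeping.
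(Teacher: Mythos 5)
Your proof is correct and in substance the same as the paper's: the easy inclusion is the same transitive-closure cardinality estimate, and for the substantive inclusion $H_{\theta}^{V[\cg]}\sse H_{\theta}[\cg]$ the paper simply cites Shelah's Claim I 5.17, whose standard proof is exactly your rank induction combined with the $\theta$-c.c.\ (here, $\abs{\cp}<\theta$) covering argument to replace the $V[\cg]$-indexed family of names by a single name in $H_{\theta}^{V}$. So you have merely unpacked the citation; no gap.
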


\begin{proof}
$H_{\theta}^{V[\cg]}\sse H_{\theta}[\cg]$ follows from the fact that any $x\in H_{\theta}^{V[\cg]}$ is of the form $\int_{\cg}(\tau)$ for some $\tau\in H_{\theta}$ (see \cite[Claim I 5.17]{proper}). If $\tau\in H_{\theta}$ is a $\cp$-name, then from $\abs{\trcl(\tau)}<\theta$ and $\rank(\int_{\cg}(\tau))\le\rank(\tau)$ we have that $\abs{\trcl(\int_{\cg}(\tau))}<\theta$.
\end{proof}

\begin{lemma}\label{t18}
If $M\prec H_{\theta}$ and $\cp\in M$, then in $V[\cg]$ we have that
\[\textstyle
M[\cg]=\set{\int_{\cg}(\tau):\tau\in M\ \mbox{is a }\cp\mbox{-name}}\prec H_{\theta}[\cg]=H_{\theta}.
\]
\end{lemma}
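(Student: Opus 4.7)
The plan is to verify the two conjuncts of the statement separately. The identity $H_\theta[\cg] = H_\theta^{V[\cg]}$ is an immediate restatement of Lemma \ref{t17}, so attention reduces to proving the elementarity $M[\cg] \prec H_\theta[\cg]$. First, it is immediate from the definition that $M[\cg]$ is a well-defined set in $V[\cg]$ and that each of its elements belongs to $H_\theta[\cg]$ (since $\tau \in M \subseteq H_\theta$ forces $\int_\cg(\tau) \in H_\theta^{V[\cg]}$ by Lemma \ref{t17}); so only the elementarity is substantive.

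I would verify the Tarski--Vaught criterion. Fix a first-order $\in$-formula $\varphi(x, y_1, \dots, y_n)$ together with $\cp$-names $\tau_1, \dots, \tau_n \in M$, and set $a_i = \int_\cg(\tau_i)$. Assume $H_\theta[\cg] \models \exists x\, \varphi(x, a_1, \dots, a_n)$. The task reduces to producing a single $\cp$-name $\sigma \in M$ such that $\int_\cg(\sigma)$ witnesses $\varphi(x, a_1, \dots, a_n)$ in $H_\theta[\cg]$.

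The key trick is to combine the forcing maximality principle with the fixed well-ordering $<_\theta$ of $H_\theta$. By the maximality principle there exists a $\cp$-name $\rho \in V$ such that $\Vdash_\cp \bigl(\exists x\, \varphi(x, \tau_1, \dots, \tau_n) \to \varphi(\rho, \tau_1, \dots, \tau_n)\bigr)$; such $\rho$ is built by mixing witness names along a maximal antichain of $\cp$, and since $\cp$ and each $\tau_i$ lie in $H_\theta$, the resulting $\rho$ may be arranged to lie in $H_\theta$ as well. Now let $\sigma$ be the $<_\theta$-least such $\rho$. Then $\sigma$ is definable in $(H_\theta, \in, <_\theta)$ from the parameters $\cp, \varphi, \tau_1, \dots, \tau_n$, each of which lies in $M$; hence Lemma \ref{t25} gives $\sigma \in M$. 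Therefore $\int_\cg(\sigma) \in M[\cg]$ is the desired witness, and Tarski--Vaught is satisfied.

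I do not expect a serious obstacle. The one delicate point is the claim that the name $\rho$ furnished by the maximality principle can be located inside $H_\theta$: one must check that the maximal antichain used to mix witnesses, together with a choice of witness names below each of its conditions, can be assembled into a name whose transitive closure has cardinality strictly less than $\theta$. This follows from $|\cp|, |\tau_i| < \theta$ and the regularity of $\theta$. Once this observation is in place, the definability step that delivers $\sigma \in M$ is entirely routine.
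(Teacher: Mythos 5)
Your proof is correct, and its skeleton is the paper's: reduce to the Tarski--Vaught criterion for $M[\cg]$ inside $H_{\theta}[\cg]$, work with $\cp$-names in $H_{\theta}$, and use $M\prec H_{\theta}$ to land a witnessing name in $M$. Where you genuinely diverge is in how that name is produced. The paper takes a name $\tau_y\in H_{\theta}$ for the witness (via Lemma \ref{t17}) together with some $p\in\cg$ forcing $\varphi(\tau_y,\tau_1,\dots,\tau_n)$, and then asserts that $M\prec H_{\theta}$ yields a name $\tau\in M$ with $p\f\varphi(\tau,\tau_1,\dots,\tau_n)$; read literally this elementarity step is delicate, since the condition $p$ need not belong to $M$, so the statement being transferred carries a parameter outside $M$. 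Your maximality-principle route eliminates $p$ entirely: mixing witness names over a maximal antichain (each local witness name can be taken in $H_{\theta}$ by Lemma \ref{t17}, the antichain has size $<\theta$ because $\cp\in M\prec H_{\theta}$ gives $\abs{\cp}<\theta$, and regularity of $\theta$ keeps the mixture in $H_{\theta}$ -- exactly the check you flag) produces a single name $\rho$ forced to witness $\varphi$ whenever a witness exists, and the $<_{\theta}$-least such $\rho$ is definable over $(H_{\theta},\in,<_{\theta})$ from $\cp,\varphi,\tau_1,\dots,\tau_n\in M$, hence lies in $M$ by Lemma \ref{t25}. So your argument is longer but airtight at precisely the point the paper compresses: a reader of the paper must supply your mixing/least-name step (or an equivalent device) to remove the dependence on $p$. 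The one gloss you share with the paper is the implicit fact that the forcing relation for statements about $H_{\theta}[\cg]=H_{\theta}^{V[\cg]}$, restricted to names in $H_{\theta}$, is definable over $(H_{\theta},\in,<_{\theta})$ (legitimate since $\cp\in H_{\theta}$ and $H_{\theta}$ satisfies enough of ZFC); this is what makes ``the $<_{\theta}$-least such $\rho$'' a parameter-definable object to which Lemma \ref{t25} applies, and it is also what underwrites the paper's own appeal to $M\prec H_{\theta}$.
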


\begin{proof}
First note that $M[\cg]\sse H_{\theta}[\cg]$. Now take any $\tau_1,\dots,\tau_n\in M$ and any formula $\varphi(x,x_1,\dots,x_n)$. Assume that there is some $y\in H_{\theta}[\cg]$ such that $H_{\theta}[\cg]\models \varphi(y,\int_{\cg}(\tau_1),\dots,\int_{\cg}(\tau_n))$. Then there is $p\in \cg$ which forces $\varphi(\tau_y,\tau_1,\dots,\tau_n)$ for a $\cp$-name $\tau_y\!\in\! H_{\theta}$ and $M\!\prec\! H_{\theta}$ implies $p\f\! \varphi(\tau,\tau_1,\dots,\tau_n)$ for a $\cp$-name $\tau\in M$. Hence $H_{\theta}[\cg]\models\varphi(\int_{\cg}(\tau),\int_{\cg}(\tau_1),\dots,\int_{\cg}(\tau_n))$
for a $\cp$-name $\tau\in M$, so $M[\cg]\prec H_{\theta}[\cg]$.
\end{proof}

\section{Properness}

In this section we show that $\cp$ satisfies the condition stronger than being proper, namely $\cp$ is strongly proper forcing. We are not sure who was the first do define this notion, but Mitchell's paper  \cite{mitchell} is a good reference.

\begin{definition}\label{d3}
If $P$ is a forcing notion and $X$ is a set, then we say that $p$ is strongly $(X,P)$-generic if for any set $D$ which is dense and open in the poset $P\cap X$, the set $D$ is predense in $P$ below $p$. The poset $P$ is strongly proper if for every large enough regular cardinal $\k$, there are club many countable elementary submodels $M$ of $H_{\k}$ such that whenever $p\in M\cap P$, there exists a strongly $(M,P)$-generic condition below $p$.
\end{definition}

\begin{lemma}\label{T1}
$\cp$ is strongly proper.
\end{lemma}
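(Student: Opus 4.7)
Fix a sufficiently large regular $\k$ and a countable $M \prec H_\k$ with $\cp, p \in M$. The natural candidate for a strongly $(M,\cp)$-generic extension of $p$ is the function $q$ that agrees with $p$ off $\d_M$ and satisfies $q(\d_M) = \set{M \cap H_\theta}$. Every $M' \in p(\alpha)$ belongs to the finite set $p(\alpha) \in M$, which by Lemma \ref{t23} is a subset of $M$, so $M' \in M \cap H_\theta$; this verifies clause (3) of Definition \ref{d1} between the old support of $p$ and the new top level, so $q \in \cp$ and $q \leq p$.

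Fix open dense $D \sse \cp \cap M$ and arbitrary $r \leq q$. The restriction $r \mid M$ is supported inside $\d_M \sse M$ and, by construction, all models listed in its values lie in $M \cap H_\theta$; hence, viewed as a finite function, $r \mid M$ is definable from finitely many parameters already in $M$ and so belongs to $M$ by Lemma \ref{t25}. Thus $r \mid M \in \cp \cap M$, and density provides $s \in D$ with $s \leq r \mid M$, so $s \in M$. To merge $r$ and $s$, define
\[\textstyle t(\alpha) = r(\alpha) \cup \set{\varphi_{M'}^{-1}(K) : M' \in r(\d_M),\ K \in s(\alpha)}\]
for $\alpha < \d_M$, and $t(\alpha) = r(\alpha)$ for $\alpha \geq \d_M$, where $\varphi_{M'} : M' \izo M \cap H_\theta$ is the canonical isomorphism, available because $M \cap H_\theta \in r(\d_M)$. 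Taking $M' = M \cap H_\theta$, so $\varphi_{M'} = \id$, shows $s \leq t$, and $r \leq t$ is direct, so $t$ is the desired common extension of $r$ and $s$ once it is shown that $t \in \cp$. Clause (2) at each level of $t$ follows from transitivity of $\cong$: $\varphi_{M'}^{-1}(K)$ is isomorphic to $K$ by the standard argument that $\varphi_{M'}$ fixes hereditarily countable elements, and for $\alpha \in \supp(r) \cap \d_M$ every element of $(r \mid M)(\alpha) \sse s(\alpha)$ is isomorphic to the members of $r(\alpha)$ by definition of the restriction.

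The principal obstacle is clause (3) of Definition \ref{d1} for $t$. The genuinely delicate case is $\alpha < \beta < \d_M$ with $\alpha \in \supp(r) \sse \supp(s)$ but $\beta \in \supp(s) \setminus \supp(r)$: given $K \in r(\alpha)$, I pick $L_0 \in r(\d_M)$ with $K \in L_0$ from clause (3) for $r$, set $K' = \varphi_{L_0}(K) \in (r \mid M)(\alpha) \sse s(\alpha)$, apply clause (3) for $s$ to obtain $L' \in s(\beta)$ with $K' \in L'$, and observe that $L := \varphi_{L_0}^{-1}(L') \in t(\beta)$ satisfies $K \in L$. All remaining situations are routine: clauses (3) for $r$ and $s$ separately handle the same-side cases, while if $\alpha < \d_M \leq \beta$ then every $K \in t(\alpha)$ lies in some $M' \in r(\d_M)$ (either by clause (3) for $r$ if $K \in r(\alpha)$, or directly if $K = \varphi_{M'}^{-1}(K_0)$), and clause (3) for $r$ combined with Lemma \ref{t23} yields $L \in r(\beta)$ with $M' \sse L$, so $K \in L$.
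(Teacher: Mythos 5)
Your proof is correct and takes essentially the same route as the paper: the strongly generic condition is $p\cup\set{\seq{\d_M,\set{M\cap H_\theta}}}$, and given $r$ below it one passes to $r\mid M\in\cp\cap M$, extends it inside the dense set, and merges back by copying the new models below $\d_M$ through the isomorphisms $\varphi_{M'}^{-1}$ for $M'\in r(\d_M)$. Your amalgam coincides with the paper's (the paper lists $s(\a)$ as a separate summand, which you recover via the identity isomorphism of $M\cap H_\theta$), and your case analysis for clause (3), including the delicate case $\a<\b<\d_M$ with $\b\notin\supp(r)$, matches the paper's verification.
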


\begin{proof}
Let $p$ be a condition in $\cp$ and $M\prec H_{\k}$ (for some $\k\ge \theta$ which is large enough) such that $p,\cp\in M$. Denote $\delta=M\cap \o_1$. We will show that the condition $p_M\!=\!p\cup \set{\seq{\d,M\cap H_{\theta}}}$ is strongly generic (i.e. if $q\le p_M$ and $\cd\sse\cp\cap M$ is dense open, then there is some $q'\in \cd$ such that $q\not\perp q'$). Let $q\le p_M$ be arbitrary and $\mathcal D\sse M\cap \cp$ dense open. Consider the condition $q\mid M\in \cp \cap M$. Because $\cd\sse M$ is dense open, there is some $q'\le q\mid M$ which is in $\cd$ (hence in $M$). To finish the proof, we still have to show that $q$ and $q'$ are compatible. First note that $\supp(q\mid M)=\supp(q)\cap \supp(q')$ and consider the following function $r:\o_1\to H_{\theta}$ defined on the support $\supp(r)=\supp(q)\cup\supp(q')$:

\noindent
For $\a\ge \d$ define $r(\a)=q(\a)$ and for $\a<\d$ define
\[\textstyle
r(\a)\!=\!(q\lor q')(\a)\cup\set{\varphi^{-1}_{N'}(N):N\!\in\! q'(\a), N'\in q(\d),\ \varphi_{N'}:N'\!\izo\! M\!\cap\! H_{\theta}}.
\]
We will prove that $r\le q,q'$ is in $\cp$ which will finish the proof. Properties (1) and (2) from Definition \ref{d1} are clear. To see (3) take any $\a,\b<\o_1$ with $\a<\b$ and any $N\in r(\a)$.

If $\a\ge \d$, then $r(\a)=q(\a)$ and $r(\b)=q(\b)$, hence there is clearly some $N'\in r(\b)=q(\b)$ such that $N\in N'$.

If $\a<\d$ and $\b\ge \d$, then $N$ belongs to some $N'\in q(\d)$ which belongs to some $N_1\in q(\b)=r(\b)$, hence $N\in N_1$ and the statement is true in this case as well.

Otherwise, note that $\supp(q)\cap\d\sse \supp(q')$ and consider the following two possibilities. The first, that $N\in q'(\a)$ and $\b<\d$. Then there is a model $N'\in q'(\b)\sse r(\b)$ such that $N\in N'$. The second case is that $N\in r(\a)\setminus q'(\a)$ and $\b<\d$. If $N=\varphi^{-1}_{N'}(N_1)$ for some $N_1\in q'(\a)$, take $N_2\in q'(\b)$ such that $N_1\in N_2$ and note that $N=\varphi^{-1}_{N'}(N_1)\in \varphi^{-1}_{N'}(N_2)\in r(\b)$. If not, then $N\in q(\a)$, so there is some $N_3\in q'(\b)$ such that $\varphi_{N'}(N)\in N_3$, hence $N\in \varphi^{-1}_{N'}(N_3)\in r(\b)$ and the proof is finished.
\end{proof}

\begin{lemma}\label{T2}
Let $\cg$ be a filter generic in $\cp$ over $V$, let $M,M'\prec H_{(2^{\theta})^+}$ and $p,\cp\in M\cap M'$. If $\varphi:M\izo M'$ then for $\delta=M\cap \o_1=M'\cap \o_1$ the condition $p_{MM'}=p\cup\set{\seq{\delta,\set{M\cap H_{\theta},M'\cap H_{\theta}}}}$ satisfies:
\[\textstyle
p_{MM'}\Vdash \check\varphi[\dot\cg\cap \check M]=\dot\cg\cap \check M'.
\]
\end{lemma}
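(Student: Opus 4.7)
My plan is to work in $V[\cg]$ for a generic $\cg$ containing $p_{MM'}$ and establish $\check\varphi[\dot\cg \cap \check M] = \dot\cg \cap \check M'$ by a direct compatibility argument. Since $p_{MM'} = p_{M'M}$ and the roles of $M, M', \varphi, \varphi^{-1}$ are entirely symmetric, it suffices to prove one inclusion, say $\varphi[\cg \cap M] \subseteq \cg \cap M'$, i.e., that $q \in \cg \cap M$ implies $\varphi(q) \in \cg$ (the reverse inclusion then follows by the symmetric argument with $\varphi^{-1}$). Using the standard observation that a condition belongs to $\cg$ iff it is compatible with every condition in $\cg$ (which follows by meeting the dense open set $\set{s : s \leq \varphi(q) \text{ or } s \perp \varphi(q)}$), the task reduces to showing that $\varphi(q)$ is compatible with an arbitrary $r \in \cg$.

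Given such $r$, I extend inside $\cg$ to some $r' \in \cg$ with $r' \leq r, p_{MM'}, q$. The structural heart of the argument is then the level-wise identity
\[
(r' \mid M')(\alpha) = \varphi[(r' \mid M)(\alpha)] \quad \text{for every } \alpha < \delta,
\]
which I verify by unpacking the restriction definition. Since $r' \leq p_{MM'}$, both $M \cap H_{\theta}$ and $M' \cap H_{\theta}$ belong to $r'(\delta)$, and $\varphi \rest (M \cap H_{\theta})$ is one of the admissible isomorphisms $M \cap H_{\theta} \izo M' \cap H_{\theta}$. For any $M'' \in r'(\delta)$ and any admissible isomorphism $\varphi_{M''} : M'' \izo M \cap H_{\theta}$, the composition $\varphi \circ \varphi_{M''}$ is an admissible isomorphism $M'' \izo M' \cap H_{\theta}$; hence each generator $\varphi_{M''}(N)$ of $(r' \mid M)(\alpha)$ corresponds to the generator $(\varphi \circ \varphi_{M''})(N) = \varphi(\varphi_{M''}(N))$ of $(r' \mid M')(\alpha)$, and the reverse correspondence is obtained by precomposing with $\varphi^{-1}$.

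I next check $r' \mid M \leq q$: for each $\alpha \in \supp(q) \sse \delta$, the set $q(\alpha)$ is a finite subset of $M \cap H_{\theta}$ (as $q \in M$ and $q(\alpha)$ is finite), and $r' \leq q$ gives $q(\alpha) \sse r'(\alpha) \cap (M \cap H_{\theta})$; taking $M'' = M \cap H_{\theta}$ and $\varphi_{M''} = \id$ in the restriction formula places every $N \in q(\alpha)$ into $(r' \mid M)(\alpha)$. Applying $\varphi$ together with the displayed identity yields $r' \mid M' \leq \varphi(q)$. Finally, I invoke the construction of a common extension from the proof of Lemma \ref{T1}, now run symmetrically with $M'$ in place of $M$: the very same argument produces some $s \in \cp$ with $s \leq r'$ and $s \leq r' \mid M'$, whence $s \leq r$ and $s \leq r' \mid M' \leq \varphi(q)$, witnessing the compatibility of $r$ and $\varphi(q)$. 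The principal technical obstacle is the displayed identity: its verification relies crucially on the definition of the restriction ranging over all admissible isomorphisms, which is precisely what allows $\varphi \rest (M \cap H_{\theta})$ to be freely inserted into and extracted from the relevant compositions. Everything else amounts to bookkeeping plus a symmetric reinvocation of the strong properness proof.
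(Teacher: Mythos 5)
Your argument is correct, and its mathematical core is the same as the paper's: everything turns on amalgamating a condition below $p_{MM'}$ with the $\varphi$-image of its restriction to $M$. The packaging, however, is genuinely different. The paper argues by contradiction: from a putative $q\le p_{MM'}$ and $x$ with $q\f \check x\in\dot\cg\cap\check M$ and $q\f\check\varphi(\check x)\notin\dot\cg\cap\check M'$ it extracts, via a density argument, some $r\le q,x$ with $r\perp\varphi(x)$, and then verifies by hand (Claim \ref{t6}) that the join $\varphi(r\mid M)\lor r$ is a condition, contradicting $r\perp\varphi(x)$ since $r\mid M\le x$. You instead work directly in $V[\cg]$: you reduce $\varphi(q)\in\cg$ to compatibility of $\varphi(q)$ with every member of $\cg$ (a legitimate use of genericity via the dense set $\set{s:s\le\varphi(q)\ \text{or}\ s\perp\varphi(q)}$), isolate the identity $(r'\mid M')(\a)=\varphi[(r'\mid M)(\a)]$ for $r'\le p_{MM'}$ --- equivalently $\varphi(r'\mid M)=r'\mid M'$, which is also the unstated engine behind Claim \ref{t6} --- check $r'\mid M\le q$ explicitly (a step the paper only asserts, in the form ``$x\in M$ and $r\le x$ imply $r\mid M\le x$''), and then recycle the strong-properness amalgamation from the proof of Lemma \ref{T1}, run with $M'$ in place of $M$ and applied to $q'=r'\mid M'$, instead of reproving compatibility ad hoc; this is legitimate because that construction uses only $r'\le p_{M'}$ and $q'\in M'\cap\cp$ with $q'\le r'\mid M'$, never the density of the set from which $q'$ was drawn there. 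What your route buys is a shorter, direct argument without the contradiction scaffolding, with the key structural fact ($\varphi$ intertwines the two restrictions) stated explicitly; what the paper's route buys is self-containment at that point, since Claim \ref{t6} verifies the join directly rather than invoking the Lemma \ref{T1} construction (which adds extra copied models that are unnecessary here). Both treatments rest on the same implicit hypothesis that $\varphi$ fixes $\cp$ and $\theta$, hence maps $M\cap H_{\theta}$ onto $M'\cap H_{\theta}$ and $\cp\cap M$ onto $\cp\cap M'$, which is needed even for the statement of the lemma to make sense.
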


\begin{proof}
Assume the contrary, that there is a condition $q\le p_{MM'}$ and a set $x$ such that $q\f \check x\in \dot \cg\cap \check M\wedge\check \varphi(\check x)\notin \dot \cg\cap \check M'$. Then we have $q\f \check x\in \dot\cg\cap \check M$ and $q\f \check\varphi(\check x)\notin \dot \cg\cap \check M'$. From the fact that $q\f \check x\in \dot\cg\cap \check M$, we have that $q\not\perp x$ (this is true because if $q\perp x$, then it is not possible that $q\f \check x\in \dot \cg$). Now take some $q'\le q,x$ and assume that for all $r\le q'$ there is some $t\le r$ such that $t\le \varphi(x)$ (which implies that $t\f \check\varphi(\check x)\in \dot \cg$). It follows that the set $\set{t\in \cp: t\f \check\varphi(\check x)\in \dot \cg}$ is dense below $q'$, which is impossible because it would imply that $q'\f \check\varphi(\check x)\in \dot \cg\cap \check M'$ which is in contradiction with the assumption that $q'\le q$ and that $q\f \check\varphi(\check x)\notin \dot \cg\cap \check M'$. Hence, we can pick some $r\le q'\le q,x$ which is incompatible with $\varphi(x)$. Now, consider the condition $r\mid M$. We have the following claim.

\begin{claim}\label{t6}
Conditions $\varphi(r\mid M)$ and $r$ are compatible.
\end{claim}

\begin{proof}
First note that $\supp(\varphi(r\mid M))\sse \supp(r)$. We will prove that the condition $s=\varphi(r\mid M)\lor r$ is in $\cp$ which will prove the claim (then $s$ will be below both $\varphi(r\mid M)$ and $r$). It is clear that the conditions (1) and (2) from Definition \ref{d1} are fulfilled so pick arbitrary $\a,\b\in \supp(s)$ with $\a<\b$. If $\a\ge \d$ then every $N\in s(\a)$ is in $r(\a)$ hence there is some $N'\in r(\b)=s(\b)$ such that $N\in N'$. If $\b\le \d$ then for $N\in s(\a)\cap r(\a)$ there is some $N'\in r(\b)\sse r(s)$ such that $N\in N'$. On the other hand, if $\b\le \d$ and $N\in s(\a)\cap \varphi(r\mid M)(\a)$, then $N=\varphi(N_1)$ for some $N_1\in (r\mid M)(\a)$, and $N_1=\psi_{N_2,M}(N_3)$ for $N_2\in r(\d)$, $N_3\in r(\a)\cap N_2$ and an isomorphism $\psi_{N_2,M}:N_2\to M\cap H_{\theta}$. Now, there is some $N_4\in r(\b)$ such that $N_3\in N_4$. So, clearly we have that $N\in \varphi(\psi_{N_2,M}(N_4))\in\varphi(r\mid M)(\b)\sse s(\b)$. If $\a<\d$ and $\d\le \b$, then for each $N\in s(\a)\cap r(\a)$ there is some $N'\in r(\b)=s(\b)$ such that $N\in N'$. Finally, let $\a<\d$, $\d\le \b$ and $N\in s(\a)\cap \varphi(r\mid M)(\a)$. Then $N\in M\cap H_{\theta}\in s(\d)\cap r(\d)$ and clearly there is some $N'\in r(\b)=s(\b)$ such that $N\in M\cap H_{\theta}\in N'$, hence $N\in N'$.
\end{proof}

Now because $x\in M$ and $r\le x$ we have that $r\mid M\le x$ which implies $\varphi(r\mid M)\le \varphi(x)$ (the last implication is true because $\varphi$ is an automorphism between $M$ and $M'$). Now from $\varphi(r\mid M)\le \varphi(x)$ and $r\perp \varphi(x)$ follows that $\varphi(r\mid M)$ and $r$ are incompatible which is not possible according to the Claim \ref{t6}.
\end{proof}

The following lemma will be useful in Section \ref{kurepa} of the paper.

\begin{lemma}\label{t13}
If $M=M'\cap H_{\theta}$ for some $M'\prec H_{(2^{\theta})^+}$ such that $\cp\in M'$ and if $M\in G(\d)$ for $\d=M\cap \o_1$, then in $V[\cg]$ we have $M[\cg]\cap \o_1=\d$.
\end{lemma}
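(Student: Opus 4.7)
Plan. The inclusion $\delta\sse M[\cg]\cap \o_1$ is automatic from $\delta=M\cap \o_1\sse M\sse M[\cg]$, so all the work sits in the reverse inclusion. My strategy is as follows: fix $\alpha\in M[\cg]\cap \o_1$, write $\alpha=\tau_\cg$ for some $\cp$-name $\tau\in M\sse M'$, and locate a single $q\in \cg\cap M'$ that decides $\tau$ as $q\Vdash\tau=\check\g$. Then $\g=\alpha$, and since $\g$ is the unique ordinal with this property it is definable in $H_{(2^\theta)^+}$ from the parameters $q,\tau,\cp\in M'$; thus $\g\in M'\cap \o_1=\delta$, which gives $\alpha<\delta$.

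The extraction of $q$ rests on strong genericity. Consider the function $r:\o_1\to H_\theta$ with $\supp(r)=\set{\delta}$ and $r(\delta)=\set{M}$; this lies in $\cp$ because a single model at a single level satisfies Definition \ref{d1} trivially. The hypothesis $M\in G(\delta)$ produces some $p\in \cg$ with $M\in p(\delta)$, so $p\le r$ and $r\in \cg$ by upward-closure of the filter. Applying Lemma \ref{T1} to the submodel $M'$ and to the empty condition $\ps\in M'$ shows that $\ps\cup\set{\seq{\delta,\set{M}}}=r$ is strongly $(M',\cp)$-generic. Since every condition of $\cp$ is a finite set whose nontrivial entries are finite subsets of $H_\theta$, one has $\cp\cap M'=\cp\cap M$, and strong $(M',\cp)$-genericity of $r\in \cg$ forces $\cg$ to meet every $M'$-definable dense subset of $\cp\cap M$.

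I then apply this to the set
\[\textstyle
E_\tau=\set{s\in \cp:(\exists\g<\o_1)(s\Vdash\tau=\check\g)}\cup\set{s\in \cp:s\Vdash\tau\notin\check\o_1}.
\]
$E_\tau$ lies in $M'$ by definability from $\tau,\cp,\o_1\in M'$, and is dense in $\cp$ (any condition extends to one that decides whether $\tau$ is a countable ordinal and, if so, its exact value); by elementarity $E_\tau\cap M'$ is dense in $\cp\cap M'$. Strong genericity of $r\in \cg$ now delivers $q\in \cg\cap E_\tau\cap M'$. The second alternative in $E_\tau$ would give $\tau_\cg\notin\o_1$, contradicting $\alpha\in\o_1$, so $q\Vdash\tau=\check\g$ for some $\g<\o_1$, and the opening paragraph finishes the argument.

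The step I expect to require the most care is the strong-genericity bookkeeping: converting the geometric hypothesis $M\in G(\delta)$ into the forcing-theoretic statement that $\cg$ meets every dense subset of $\cp\cap M$ lying in $M'$, via the identification $\cp\cap M=\cp\cap M'$ and the application of Lemma \ref{T1} to the trivial condition $\ps$. Once that reduction is in place, meeting $E_\tau$ is a routine instance of the forcing theorem.
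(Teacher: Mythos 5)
Your proof is correct and follows essentially the same route as the paper: in both cases the heart of the matter is that a condition placing $M$ on level $\d$ is (strongly) $(M',\cp)$-generic. The paper then simply cites the standard fact that generic conditions force $M'\cap\ord=M'[\cg]\cap\ord$ (Shelah, Lemma III 2.6), whereas you re-verify its $\o_1$-instance by hand via the deciding dense set $E_\tau$ and elementarity of $M'$ --- a legitimate unpacking of the same argument.
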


\begin{proof}
First note that any $p\in \cg$ such that $M\in p(\d)$ is an $(M',\cp)$-generic condition which forces that $M'\cap \ord=M'[\cg]\cap \ord$ (see \cite[Lemma III 2.6]{proper}). Because $\o_1\sse H_{\theta}$ this implies that $M[\cg]\cap \o_1=M\cap \o_1=\d$.
\end{proof}

\section{Preserving CH}

\begin{lemma}[CH]\label{t15}
$\cp$ satisfies $\o_2$-c.c.
\end{lemma}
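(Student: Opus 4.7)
The plan is to combine Lemma \ref{t14} with a straightforward counting argument. First I would observe that, under CH, the cardinality of $H_{\o_1}$ is $\aleph_1$. Since every $\bar{p}$ associated to a $p\in\cp$ is a finitely supported function from $\o_1$ into $H_{\o_1}$, and since a finite partial function is determined by its finite graph, the total number of such functions is bounded by $|[\o_1\times H_{\o_1}]^{<\o}|=\aleph_1$. Thus under CH the set $\{\bar{p}:p\in\cp\}$ has cardinality at most $\aleph_1$.

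Next, given an arbitrary family $\{p_{\xi}:\xi<\o_2\}\sse\cp$, I would apply the pigeonhole principle to the assignment $\xi\mapsto \bar{p}_{\xi}$. Since the range has size at most $\aleph_1$ while the domain has size $\aleph_2$, there must exist distinct ordinals $\xi<\eta<\o_2$ with $\bar{p}_{\xi}=\bar{p}_{\eta}$. Then Lemma \ref{t14} immediately yields that $p_{\xi}$ and $p_{\eta}$ are compatible. Hence no family of $\aleph_2$ conditions can be an antichain, which is precisely the $\o_2$-chain condition.

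There is no genuine obstacle in this argument: all the combinatorial work has already been done in Lemma \ref{t14}, which shows that the transitive collapse is a complete invariant for compatibility in the strong sense that $\bar{p}=\bar{q}$ forces $p$ and $q$ to be compatible (indeed, their union is a condition). The CH hypothesis enters only through the bound $|H_{\o_1}|=\aleph_1$ that controls the number of possible collapses; beyond this counting step CH plays no further role in the proof.
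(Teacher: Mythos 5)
Your proposal is correct and follows exactly the paper's argument: count the possible transitive collapses $\bar p$ (at most $\aleph_1$ many under CH, since each is essentially a finite subset of $\o_1\times H_{\o_1}$ and $|H_{\o_1}|=\aleph_1$), apply the pigeonhole principle to any family of $\aleph_2$ conditions, and invoke Lemma \ref{t14} to get two compatible ones. No differences worth noting.
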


\begin{proof}
Assume that CH holds and that there is a sequence $\set{p_{\a}:\a<\o_2}$ of pairwise incompatible elements of $\cp$. For each $\a<\o_2$ the function $\bar p_{\a}$ (the transitive collapse of $p_{\a}$) is a finite subset of $\o_1\times H_{\o_1}$. Hence, there are some distinct $\a,\b<\o_2$ such that $\bar p_{\a}=\bar p_{\b}$ (here we are using CH which implies that $\abs{H_{\o_1}}=\o_1$). But then conditions $p_{\a}$ and $p_{\b}$ are compatible by Lemma \ref{t14} which is a contradiction with the choice of the sequence $\set{p_{\a}:\a<\o_2}$.
\end{proof}

\begin{lemma}
$\cp$ preserves CH.
\end{lemma}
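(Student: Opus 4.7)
The plan is to bound the number of reals in $V[\cg]$ by $\aleph_1$ by associating to each real an isomorphism class $\overline M\in H_{\o_1}^V$ of countable elementary submodels of $H_{(2^{\theta})^+}$ (of which CH gives only $\aleph_1$) together with a canonical ``collapsed generic'' $\tilde{\cg}_{\overline M}$, so that the real lies in the countable set $\overline M[\tilde{\cg}_{\overline M}]$; the bound $\aleph_1\cdot\aleph_0=\aleph_1$ then follows.

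First I would fix a real $r\in V[\cg]\cap 2^{\o}$ with a $\cp$-name $\dot r\in V$, and argue that, by a density argument, one may find a countable $M\prec H_{(2^{\theta})^+}$ in $V$ with $\cp,\dot r\in M$ and $M\cap H_{\theta}\in G(\d_M)$. Indeed, for any $p\in M$ the ordinal $\d_M$ exceeds $\sup\supp(p)$, so $p\cup\set{\seq{\d_M,\set{M\cap H_{\theta}}}}$ is a legitimate extension of $p$ in $\cp$, and the set of conditions of this form is dense. By Lemma \ref{T1} the witnessing condition is strongly $(M,\cp)$-generic, which lets me replace $\dot r$ below it by a countable nice name $\dot r^{\ast}\in M$; the transitive collapse $\pi_M$, which is the identity on $\o$, then gives $r=\int_{\cg}(\dot r^{\ast})\in \overline M[\tilde{\cg}_M]$, where $\tilde{\cg}_M:=\pi_M(\cg\cap M)$.

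The crucial step is then showing $\tilde{\cg}_M$ depends only on $\overline M$. Given another countable $M'\prec H_{(2^{\theta})^+}$ with $\overline{M'}=\overline M$ and $M'\cap H_{\theta}\in G(\d_M)$, the filter property of $\cg$ produces a single $r\in \cg$ with $\set{M\cap H_{\theta},M'\cap H_{\theta}}\sse r(\d_M)$; this $r$ extends the condition $p_{MM'}$ of Lemma \ref{T2} for the isomorphism $\varphi:=\pi_{M'}^{-1}\circ \pi_M:M\izo M'$, so that lemma forces $\varphi[\cg\cap M]=\cg\cap M'$, whence $\tilde{\cg}_{M'}=\pi_{M'}(\cg\cap M')=\pi_M(\cg\cap M)=\tilde{\cg}_M$. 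Every real of $V[\cg]$ thus lies in some countable $\overline M[\tilde{\cg}_{\overline M}]$, and under CH the collapses $\overline M$ range over a subset of $H_{\o_1}^V$ of size $\aleph_1$, giving $|2^{\o}|^{V[\cg]}\le \aleph_1$. The main obstacle is the well-definedness of $\tilde{\cg}_{\overline M}$ just described: it hinges on the filter property of $\cg$ and on Lemma \ref{T2} being applicable to any two isomorphic representatives $M,M'$ simultaneously present in $G(\d_M)$.
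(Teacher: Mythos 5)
Your strategy is genuinely different from the paper's: the paper takes $\aleph_2$ names for pairwise distinct reals, uses CH and a pigeonhole on transitive collapses to find $M_\alpha\cong M_\beta$ with $\varphi(\tau_\alpha)=\tau_\beta$, and then forces $\tau_\alpha=\tau_\beta$ below the two-model condition (Claim \ref{t3}); you instead try to cover all reals of $V[\cg]$ by $\aleph_1$ countable sets of the form $\overline M[\tilde{\cg}_{\overline M}]$. Your first two steps are essentially sound: the density argument producing $M$ with $M\cap H_\theta\in G(\delta_M)$ is the paper's own construction of $p_M$, and strong genericity (Lemma \ref{T1}) does let you recover $r$ inside $\overline M$ from $\pi_M(\dot r)$, $\pi_M(\cp)$ and $\pi_M[\cg\cap M]$, via ``$r(n)=\epsilon$ iff some $q\in\cg\cap M$ forces $\dot r(\check n)=\check\epsilon$''.

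The gap sits exactly where you locate the crux. For two representatives $M,M'$ with $\overline M=\overline{M'}$, the map $\varphi=\pi_{M'}^{-1}\circ\pi_M$ is the unique $\in$-isomorphism and fixes only those elements that are definable in $H_{(2^\theta)^+}$ without parameters; but $\cp$ is defined from the parameters $\theta$ and $<_\theta$ and is not so definable. Hence $\overline{M'}=\overline M$ guarantees neither $\cp\in M'$ nor $\varphi(\cp)=\cp$, whereas Lemma \ref{T2} assumes $\cp\in M\cap M'$ and its proof (indeed the very meaning of its conclusion $\varphi[\cg\cap M]=\cg\cap M'$) requires $\varphi$ to carry $\cp\cap M$ onto $\cp\cap M'$; the paper arranges this in its own application by matching distinguished points under the collapse ($\varphi(\tau_\alpha)=\tau_\beta$, $\varphi(M_\alpha\cap H_\theta)=M_\beta\cap H_\theta$, and implicitly $\varphi(\cp)=\cp$). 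Without it, $\tilde{\cg}_M=\pi_M[\cg\cap M]$ may genuinely depend on the representative $M$ and not just on $\overline M$, and since there can be far more than $\aleph_1$ representatives, your count $\aleph_1\cdot\aleph_0$ does not follow as stated. The repair is routine and preserves your architecture: index by the enriched collapse $(\overline M,\pi_M(\cp))$ (equivalently, collapse the structure $(M,\in,\cp)$ with a constant for $\cp$). Under CH there are still only $\aleph_1$ such indices because $\pi_M(\cp)\in\overline M$; for two representatives of the same enriched index the canonical isomorphism does fix $\cp$ (hence also $H_\theta$, which is definable from $\cp$), so Lemma \ref{T2} applies and gives the well-definedness of $\tilde{\cg}$; and note that your reconstruction of $r$ inside $\overline M$ already uses $\pi_M(\cp)$, so the pair is the natural invariant in any case.
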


\begin{proof}
Assume that CH holds in $V$ and that there is a sequence $\set{\tau_{\a}:\a<\o_2}$ of $\cp$-names and a condition $p\in \cg$ such that
\[\textstyle
p\Vdash``\seq{\tau_{\a}:\a<\o_2}\ \mbox{is a sequence of pairwise distinct reals"}.
\]
For each $\a<\o_2$ let $M_{\a}$ be a countable elementary submodel of $H_{(2^{\theta})^+}$ containing $\cp,\tau_{\a},p$.

Now, using CH, we conclude that there are $\a,\b<\o_2$ ($\a\neq\b$) such that there is an automorphism $\varphi:M_{\a}\to M_{\b}$ which satisfies $\varphi(\tau_{\a})=\tau_{\beta}$. To see this consider the transitive collapses of $M_{\a}$'s, they are countable submodels of $H_{\o_1}$ but since we assumed the CH (which implies $\abs{H_{\o_1}}=\o_1$) there must be two collapses $\overline{M_{\a}}$ and $\overline{M_{\b}}$ which are isomorphic (via isomorphism $\phi$). Then the isomorphism $\varphi$ is given by $\varphi=\pi^{-1}_{M_{\b}}\circ \phi\circ \pi_{M_{\a}}$. Also, it clearly holds that $\varphi(M_{\a}\cap H_{\theta})=M_{\b}\cap H_{\theta}$, and because $M_{\a}$ and $M_{\b}$ are isomorphic we can denote $\delta=M_{\a}\cap \o_1=M_{\b}\cap \o_1$.

Now we prove that there is a condition $p_{\a\b}\le p$ such that $p_{\a\b}\f \tau_{\a}=\tau_{\b}$, hence $\tau_{\a}$ and $\tau_{\b}$ cannot be names for distinct reals in $V[\cg]$. First note that $\varphi$ being an isomorphism, we have
\begin{equation}\label{eq2}
\forall n<\o\ \forall p'\in M_{\a}\cap \cp\ \forall \epsilon<2\ (p'\f \tau_{\a}(\check n)=\check \epsilon\akko \varphi(p')\f \tau_{\b}(\check n)=\check \epsilon).
\end{equation}

\begin{claim}\label{t3}
If $p_{\a\b}=p\cup\set{\seq{\d,\set{M_{\a}\cap H_{\theta},M_{\b}\cap H_{\theta}}}}$, then $p_{\a\b}\f \tau_{\a}=\tau_{\b}$.
\end{claim}

\begin{proof}
Assume the contrary, that there is some $q\le p_{\a\b}$ and $n<\o$ such that $q\f \tau_{\a}(\check n)\neq \tau_{\b}(\check n)$ (suppose that $q\f\tau_{\a}(\check n)=\check 0$ and $q\f \tau_{\b}(\check n)=\check 1$). Then by elementarity of $M_{\a}$ there is some $r\in \cp \cap M_{\a}$ such that $r\le q\mid M$ and $\supp(q\mid M_{\a})\sqsubseteq \supp(r)$ (where $a\sqsubseteq b$ denotes that $a$ is an initial segment of $b$) and which satisfies $r\f \tau_{\a}(\check n)=\check 0$.  Again, because $\varphi$ is an isomorphism we have $\varphi(q\mid M_{\a})=q\mid M_{\b}$, so $q\mid M_{\b}$ is compatible with $\varphi(r)$. From $q\mid M_{\b}\not\perp \varphi(r)$ we conclude that $q\not\perp \varphi(r)$. But, then $\varphi(r)\f \tau_{\b}(\check n)=\check 0$ (from equation \ref{eq2}) which is in contradiction with the fact that $q\f \tau_{\b}(\check n)=\check 1$ (simply because $\varphi(r)\not\perp q$).
\end{proof}

Hence, according to the Claim \ref{t3}, $p$ cannot force that $\seq{\tau_{\a}:\a<\o_2}$ is a sequence of names for distinct reals in $V[\cg]$, which proves the theorem.\end{proof}

\section{Kurepa tree}\label{kurepa}

Recall that Kurepa tree is a tree of height $\o_1$, with all levels countable but at least $\o_2$ branches. In this section we will show that forcing with $\cp$ adds a Kurepa tree.

\begin{theorem}\label{t4}
There is a Kurepa tree in $V[\cg]$.
\end{theorem}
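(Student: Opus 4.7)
The plan is to read off, inside $V[\cg]$, a Kurepa tree $T$ whose $\o_2$ cofinal branches are indexed by the ordinals $\a<\o_2$ of the ground model.

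\emph{Density preliminaries.} Using strong properness (Lemma \ref{T1}) together with the closure Lemma \ref{t20}, I first establish three density facts:
(i) for each $\g<\o_1$, the set $\{p\in\cp:\g\in\supp(p)\}$ is dense, so $\supp(\cg)=\o_1$ and consequently the map $\g\mapsto\d_\g$ is strictly increasing (by Definition \ref{d1}(3)) with image a club $C\sse\o_1$;
(ii) for each $\a<\o_2$, the set $D_\a$ of conditions $p$ admitting some $M\in p(\g)$ with $\a\in M$ is dense;
(iii) for each $\a<\a'<\o_2$, the set of $p$ admitting some $M\in p(\g)$ containing both $\a,\a'$ is dense.
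In each case the extension is produced by introducing, just above the support of the given condition, a new row consisting of a single countable elementary submodel $M\prec H_\theta$ containing the prescribed parameters together with all finitely many models already appearing in the condition; this verifies Definition \ref{d1}(3) upwards automatically, and trivially downwards since nothing lies above the new row.

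\emph{The tree.} Write $L_M=M\cap\o_2$ for $M\in\bigcup_\g G(\g)$. I let the $\g$-th level of $T$ be
\[
T_\g=\{L_M\cap\a : M\in G(\g),\ \a\in M\cup\{0\}\},
\]
ordered across levels by: $L_M\cap\a\le_T L_N\cap\a$ iff $M\in N$ (so in particular the index $\a$ is preserved, and the row of $M$ precedes that of $N$). Lemma \ref{t9} is exactly what is needed to see that $T_\g$ is well-defined and countable: for two isomorphic $M_0,M_1\in G(\g)$ with $\a\in M_0\cap M_1$, the sets $L_{M_0}\cap L_{M_1}$ form an initial segment of both, and $\a$ necessarily falls in this common initial segment, whence $L_{M_0}\cap\a=L_{M_1}\cap\a$. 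The predecessor set of a node $L_N\cap\a\in T_{\g'}$ is linearly ordered in $T$ because it consists of $L_M\cap\a$ with $M\in N$, and the collection of models from $\bigcup_\g G(\g)$ contained as elements in the fixed countable model $N$ is $\in$-linearly ordered along $C$ via Definition \ref{d1}(3) and Lemma \ref{t9}.

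\emph{Branches.} For each $\a<\o_2$ define
\[
b_\a=\{L_M\cap\a : M\in\bigcup_\g G(\g),\ \a\in M\}.
\]
By density (ii) together with Definition \ref{d1}(3) (which propagates membership of $\a$ upward along the generic $\in$-chain, since $M\in N$ and $M$ countable gives $M\sse N$ by Lemma \ref{t23}), the set of $\g\in C$ contributing a value to $b_\a$ is cofinal in $\o_1$, so $b_\a$ is a cofinal chain in $T$. For $\a<\a'$, density (iii) produces some level $\g\in C$ with a single $M\in G(\g)$ containing both ordinals; then $\a\in(L_M\cap\a')\setminus(L_M\cap\a)$, so $b_\a$ and $b_{\a'}$ take distinct values at level $\g$ and therefore represent distinct branches. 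Thus $T$ carries at least $\o_2$ distinct cofinal branches and is a Kurepa tree.

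\emph{Main obstacle.} The non-routine step is verifying that $T$ is genuinely a tree of countable levels: both require carefully combining the $\in$-coherence enforced by Definition \ref{d1}(3) across rows with the common-initial-segment phenomenon from Lemma \ref{t9} inside rows, so that the multiplicity of isomorphic copies of $\bar M$ inside $G(\g)$ does not blow up the level count and so that predecessors of each node are linearly ordered. The density arguments themselves are standard, but one must check that the new row added above a given condition can always be filled by a single model simultaneously containing $\a$ (or $\{\a,\a'\}$) and all the finitely many models already present.
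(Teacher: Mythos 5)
Your construction breaks precisely at the step you flag as the main obstacle: countability of the levels. Lemma \ref{t9} gives only well-definedness of a node --- it shows $L_{M_0}\cap\a=L_{M_1}\cap\a$ when $\a$ lies in two models of $G(\g)$ --- but it puts no bound whatsoever on the number of distinct sets $L_M\cap\a$ at a level, and in fact your levels are provably uncountable on a tail of $\g$'s whenever the construction has any chance of yielding a Kurepa tree. To see this, let $A_\g=\bigcup_{M\in G(\g)}M\cap\o_2$. Your density fact (ii) gives $\bigcup_{\g<\o_1}A_\g\supseteq\o_2^V$, so if $\o_2^V$ is not collapsed (which is needed for the tree to have $\aleph_2$ branches), then some --- hence a tail of --- $A_\g$ is uncountable, since an increasing $\o_1$-union of countable sets has size at most $\aleph_1$. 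Now suppose your level $T_\g$ were countable and let $U=\bigcup T_\g$, a countable subset of $\o_2$. Every $M\in G(\g)$ satisfies $\o_2\in M$, so $L_M=M\cap\o_2$ has no largest element; hence every $\a\in L_M$ lies in $L_M\cap\a'$ for some larger $\a'\in L_M$, i.e. $\a\in U$. Thus $A_\g\sse U$, contradicting uncountability of $A_\g$. So your tree has uncountable levels and is not a Kurepa tree.

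The paper avoids exactly this trap by taking as node data not the actual trace $M\cap\o_2\cap\a$ but its isomorphism-invariant shadow: $f_\a(\d)=\pi_M(\a)$, the collapse value of $\a$ in some (any) $M\in G(\d)$ containing $\a$, with the $\d'$-th level consisting of the restrictions $f_\a\rest\d'$. Countability of levels is then genuinely forced (Claim \ref{t5}): below a strongly generic condition carrying a ``master'' model $M'$ on a high row $\d$, any $\a'$ lying in a model $N_0$ on a lower row sits inside some $N_1\in r(\d)$, and after passing to the closure $\cl_{\d}(r)$ one can transport $\a'$ by an isomorphism $\varphi:N_1\izo M'$ to $\a=\varphi(\a')\in M'\cap\o_2$ with $f_{\a'}\rest\d'=f_\a\rest\d'$; this works because collapse values are preserved by such isomorphisms, whereas the sets $M\cap\o_2\cap\a$ you use are not isomorphism-invariant, so no analogous argument is available for your tree. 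This master-model/isomorphism argument is the heart of the paper's proof and is entirely missing from your proposal; your density facts and the distinctness of the $\o_2$ branches are fine and parallel the paper, but the countable-levels step needs the collapse-valued coding, not Lemma \ref{t9} alone.
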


\begin{proof}
For each $\a<\o_2$ define the function $f_{\a}:\o_1\to \o_1$:
\begin{equation}\label{eq3}
f_{\a}(\d)=\left\{\begin{array}{rl}\xi,&\mbox{if there is }M\in G(\d)\ \mbox{such that}\ \a\in M,\ \pi_{M}(\a)=\xi;\\ 0,&\mbox{otherwise.}\end{array}\right.
\end{equation}
Note that if there are two isomorphic models $M_1,M_2\in G(\d)$ containing $\a<\o_2$ then Lemma \ref{t9} implies that $\pi_{M_1}(\a)=\pi_{M_2}(\a)$, so each $f_{\a}$ is well defined. Also, if $\a\neq \b$, then there are some $\d<\o_1$ and $M\in G(\d)$ such that $\set{\a,\b}\in M$, but then $\pi_M(\a)\neq\pi_M(\b)$ (i.e. $f_{\a}(\d)\neq f_{\b}(\d)$). So for $\a\neq\b$ we have $f_{\a}\neq f_{\b}$. If we denote the set of functions coding branches in $T$ by $\mathcal F=\set{f_{\a}:\a<\o_2}$ then $\mathcal F_{\a}=\set{f_{\a}\rest\d:\d<\o_1}$ will be the $\a$-th branch and the Kurepa tree will be given by $T=\bigcup_{\d<\o_1}T_{\d}$, where $T_{\d}=\mathcal F\rest \d=\set{f_{\a}\rest\d:\a<\o_2}$. We will show that for each $\d$, the level $T_{\delta}$ is countable. This will finish the proof. So assume that there are some $p\in \cg$ and $\d'<\o_1$ such that $p\f "\dot{T_{\d'}}\ \mbox{is uncountable}"$. Take a countable elementary submodel $M$ of $H_{({2^{\theta}})^+}$ such that $p,\cp,\d'\in M$ and denote $\d=M\cap \o_1$. Because we have chosen $M$ so that $\d'\in M$ we have $\d'<\d$. Consider the $(M,\cp)$-generic condition $p_{M}=p\cup\set{\seq{\d,\set{M\cap H_{\theta}}}}\le p$ (note that $p_M\le p$ because $p\in M$). The following claim shows that the $(M,\cp)$-generic condition forces that the branches passing through the $\d'$-th level are indexed only by ordinals less then $\o_2$ which are already in $M$.

\begin{claim}\label{t5}
Suppose that $p'\in \cp$ is such that $M'\in G(\d)\cap p'(\d)$ and $\d\ge\d'$. Then $p'\f \dot{T_{\d'}}=\set{\dot{f_{\a}}\rest\check\d':\check\a\in \check M'\cap \check\o_2}$
\end{claim}

\begin{proof}
The inclusion "$\supseteq$" is trivial. To prove the reverse inclusion take some $q\le p'$ and $\a'<\o_2$. Because $q\le p'$ we have that $q\f \dot{f_{\a'}}\rest\check\d'\in\dot{T_{\d'}}$. In order to finish the proof of the claim, we will find $r\le q$ and $\a\in M'\cap \o_2$ such that $r\f \dot{f_{\a'}}\rest\check\d'=\dot{f_{\a}}\rest\check\d'$. We will consider two cases.

\vskip2mm
Case I, $f_{\a'}\rest\d'=0$. Then for $0\in M'\cap \o_2$ we have that $f_0\rest\d'=f_{\a'}\rest\d'$. To see this notice that either for every $\g<\d'$ there is some $N\in G(\g)$ and then clearly $0\in N$ and $\pi_N(0)=0$ or $G(\g)=\ps$ and again $f_0(\g)=0$. So we clearly have $q\f \dot{f_{\a'}}\rest\check\d'=\dot{f_0}\rest\check\d'$.

\vskip2mm
Case II, there is $\g<\d'$ such that $f_{\a'}(\g)\neq 0$. Let $\g_0$ be minimal such $\g$. This means that for some $N_0\in G(\g_0)$ we have $\a'\in N_0$. Now, there is some $r\le q\le p'$ such that $N_0\in r(\g_0)$, hence there is some $N_1\in r(\d)$ such that $N_0\in N_1$ (note that $r(\d)$ is not empty because $M'\in p'(\d)\sse r(\d)$). Now we have that $\a'\in N_1$ ($N_0\in N_1$ implies $N_0\sse N_1$). Because $M'\in r(\d)$ there is an isomorphism $\varphi:N_1\izo M'$. Denote $\a=\varphi(\a')$. We show that the condition $r_1=\cl_{\d}(r)$ (note $r_1\le r\le q$) forces $"\dot{f_{\a'}}\rest\check\d'=\dot{f_{\a}}\rest\check\d'"$. First, if $\g'<\g_0$ then $r_1\f \dot{f_{\a}}(\check\g')=0=\dot{f_{\a'}}(\check\g')$. This is true because if there is some $N'\in r_1(\g')$ such that $\a=\varphi(\a')\in N'$, then there would be some $\varphi^{-1}(N')\in r_1(\g')$ such that $\a'\in \varphi^{-1}(N')$ which is impossible by the choice of $\g_0$ (it is a minimal ordinal such that $\a'$ belongs to some model on its level in generic filter). If $\g\ge\g_0$ and $\g<\d'$, let $r_1\f \dot{f_{\a'}}(\check\g)=\check\xi\neq 0$ (it cannot be 0 because $\a'\in N_0\in r_1(\g_0)$ so for every $\g\ge \g_0\ \exists N\in r_1(\g)\ \a'\in N_0\in N$ and since $\a'\neq 0$ we clearly have $\pi_N(\a')\neq 0$) and take $N'\in r_1(\g)$ such that $\a'\in N'$ and $\pi_{N'}(\a')=\xi$. Then $\a\in \varphi(N')\in r_1(\g)$ and clearly $\pi_{\varphi(N')}(\a)=\xi$ which implies $r_1\f \dot{f_{\a}}(\check\g)=\check\xi$. So $r_1\le q$ forces $"\dot{f_{\a'}}\rest\check\d'=\dot{f_{\a}}\rest\check\d'"$ and the claim is proved.
\end{proof}

Now according to the Claim \ref{t5} $p_{M}\f \dot{T_{\d'}}=\set{\dot{f_{\a}}\rest\check\d':\check\a\in \check M\cap \check\o_2}$ so $p$ cannot force that $T_{\d'}$ is uncountable (because $M$ is countable and $p_M\le p$), hence $T_{\d'}$ is countable in $V[\cg]$.
\end{proof}

\begin{corollary}\label{t19}
Every uncountable downward closed set $S\sse T$ contains a branch of $T$.
\end{corollary}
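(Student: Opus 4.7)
The plan is to assume for contradiction that $S$ contains no cofinal branch of $T$, and derive a contradiction by placing a countable elementary submodel $N\prec H_{(2^{\theta})^+}$ into the generic filter at its own level. The branch eventually exhibited will be $f_{\a_0}$ for some $\a_0\in \o_2$.

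Let $\dot S$ be a $\cp$-name for $S$ and let $p\in\cg$ force $\dot S$ to be uncountable downward closed with $[\dot S]=\ps$. I would first verify that the set
\[
D_p = \{q\leq p:\exists N\prec H_{(2^{\theta})^+}\text{ countable},\ p,\dot S,\cp\in N,\ N\cap H_\theta\in q(N\cap\o_1)\}
\]
is dense below $p$. Given $q\leq p$, choose such an $N\ni q$; because $q\in N$, Lemma \ref{t23} gives $\supp(q)\sse N\cap\o_1=\d_N$, so $q(\d_N)=\ps$, and then $q\cup\{\seq{\d_N,\{N\cap H_\theta\}}\}$ is a valid extension of $q$ in $D_p$ (the $\in$-chain clause is automatic because every model appearing in $q$ below level $\d_N$ already lies in $N\cap H_\theta$). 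Pick $q^*\in\cg\cap D_p$ with witness $N^*$, and set $M^*:=N^*\cap H_\theta$, $\d^*:=N^*\cap\o_1$. Since $S$ is uncountable with countable levels and downward closed, $S$ meets every level of $T$; in particular $S\cap T_{\d^*}\neq\ps$, and Claim \ref{t5} applied to $q^*$ (with $M'=M^*$ and $\d'=\d=\d^*$) gives $q^*\Vdash\dot{T_{\d^*}}=\{\dot{f_\a}\rest\check\d^*:\a\in\check N^*\cap\check\o_2\}$. Thus there is $\a_0\in M^*\cap\o_2$ with $f_{\a_0}\rest\d^*\in S$, and hence $f_{\a_0}\rest\g\in S$ for all $\g\leq\d^*$ by downward closure.

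It remains to show $f_{\a_0}\in[S]$, contradicting $[S]=\ps$. Since $\a_0\in M^*$ and $\cp\in M^*$, the canonical $\cp$-name $\dot{f_{\a_0}}$ (definable from $\a_0$ and $\cp$) lies in $M^*$, so both $f_{\a_0}$ and $S$ belong to $M^*[\cg]$. Because $M^*\in q^*(\d^*)\sse G(\d^*)$, Lemma \ref{t13} yields $M^*[\cg]\cap\o_1=\d^*$, and Lemma \ref{t18} yields $M^*[\cg]\prec H_\theta[\cg]=H_\theta^{V[\cg]}$. Now if $f_{\a_0}\notin[S]$, then there is some $\g<\o_1$ with $f_{\a_0}\rest\g\notin S$; by elementarity such a $\g$ can be chosen in $M^*[\cg]\cap\o_1=\d^*$, so $\g<\d^*$, contradicting $f_{\a_0}\rest\g\in S$ for $\g\leq\d^*$. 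Hence $f_{\a_0}$ is a branch of $T$ contained in $S$. The main obstacle is this final reflection step, which hinges on combining the "capture" $M^*[\cg]\cap\o_1=\d^*$ with the forcing-elementarity Lemma \ref{t18} in order to push any putative bad level of $f_{\a_0}$ down strictly below $\d^*$.
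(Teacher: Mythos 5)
Your proof is correct, and while it shares the paper's skeleton (get a countable $N\prec H_{(2^{\theta})^+}$ with $\dot S,p,\cp\in N$ so that $N\cap H_{\theta}$ sits at level $\d^*=N\cap\o_1$ of a condition in $\cg$, use Claim \ref{t5} to write $T_{\d^*}$ as $\set{f_{\a}\rest\d^*:\a\in N\cap\o_2}$, find $\a_0\in N\cap\o_2$ with $f_{\a_0}\rest\d^*\in S$, then exploit $N[\cg]\cap\o_1=\d^*$ and elementarity of the generic extension of the model), it differs from the paper in two ways. First, you realize the side condition inside $\cg$ by a density argument ($D_p$), whereas the paper extends $p$ to $p_M$ and argues that $p$ cannot force the failure; your version is if anything tidier, since the paper reasons in $V[\cg]$ with a condition $p_M$ that need not belong to $\cg$. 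Second, and more substantively, your endgame is different: the paper analyzes $S\cap\mathcal F_{\a}$ and runs a countable/uncountable dichotomy through Lemmas \ref{t23} and \ref{t24} (countable would put $S\cap\mathcal F_{\a}$ inside $M[\cg]$ and bound it below $\d$, contradicting $f_{\a}\rest\d\in S$; so it is uncountable, hence cofinal, hence the branch is contained in $S$ by downward closure), while you simply reflect the statement ``there is a level $\g$ with $f_{\a_0}\rest\g\notin S$'' into the model's extension, forcing a witness $\g<\d^*$ and contradicting downward closure from level $\d^*$. This is a genuinely shorter finish that bypasses Lemmas \ref{t23} and \ref{t24}. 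One small citation fix: Lemma \ref{t18} as stated requires $\cp\in M\prec H_{\theta}$, and $\cp$ need not lie in $H_{\theta}$; so instead of claiming $M^*[\cg]\prec H_{\theta}^{V[\cg]}$ you should apply Lemma \ref{t18} at the level $(2^{\theta})^+$ (exactly as the paper does) to get $N^*[\cg]\prec H_{(2^{\theta})^+}^{V[\cg]}$, note $f_{\a_0},S\in N^*[\cg]$ and $N^*[\cg]\cap\o_1=\d^*$ (Lemma \ref{t13} and its proof), and run your reflection there; nothing else in your argument changes.
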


\begin{proof}
First recall that we denoted a branch of $T$ by $\mathcal F_{\a}=\set{f_{\a}\rest\g:\g<\o_1}$. Take a $\cp$-name $\dot S$ for $S$ and $p\in \mathcal G$ such that $p\f "\dot S$ is downward closed". Now pick $M\prec H_{(2^{\theta})^+}$ such that $\dot S,\cp,p\in M$ and denote $\d=M\cap \o_1$. So we have that $S\in M[\cg]\prec H_{(2^{\theta})^+}[\cg]=H_{(2^{\theta})^+}^{V[\cg]}$ (according to Lemma \ref{t18}). We have already shown that the condition $p_M=p\cup\set{\seq{M\cap \o_1,\set{M\cap H_{\theta}}}}$ is $(M,\cp)$-generic and by Claim \ref{t5} we have $p_M\f \dot{T_{\d}}=\set{\dot{f_{\a}}\rest\check\d:\check\a\in \check M\cap \check\o_2}$. Note also that according to Lemma \ref{t13} $p_M\f \check\d=\check M[\dot{\cg}]\cap \check\o_1$. Now, because $S$ is uncountable and each level in $T$ is countable by Theorem \ref{t4}, there is some $\b>\d$ such that $S\cap T_{\b}\neq\ps$ and because it is downward closed there is some $\a\in M\cap \o_2$ such that $f_{\a}\rest\d\in S$. Now the fact that $S$ and $\mathcal F_{\a}$ are in $M[\cg]$ (for $S$ is clear and for $\mathcal F_{\a}$ it follows from Lemma \ref{t25} and the fact that $f_{\a}$ is defined only from $\cg$ and $\a\in M$) implies that $S\cap \mathcal F_{\a}\in M[\cg]$. If the set $S\cap \mathcal F_{\a}$ is countable, then by elementarity of $M[\cg]$ in $H_{(2^{\theta})^+}^{V[\cg]}$ and Lemma \ref{t23} we have $S\cap \mathcal F_{\a}\sse M[\cg]$. Now $\d=M[\cg]\cap\o_1=M\cap\o_1$ and Lemma \ref{t24} imply that there is some $\g'<\d$ such that for every $\g\ge\g'$ we have $f_{\a}\rest\g\notin S$ which contradicts the assumption that $f_{\a}\rest\d\in S$. So $S\cap \mathcal F_{\a}$ is uncountable and if a downward closed set in a tree of height $\o_1$ intersects a branch at uncountably (hence cofinally) many levels it clearly contains that branch. Hence, $\mathcal F_{\a}\sse S$.
\end{proof}

\section{Almost Souslin tree}\label{continuousmatrix}

In this section we consider the slightly modified version of the poset $\cp$. Namely, let $\cp_c$ be the partial order satisfying all the conditions (1)-(3) from Definition \ref{d1} together with
\begin{itemize}
\item[(4)] for every $p\in \cp_c$ there is a continuous $\in$-chain $\seq{M_{\xi}:\xi<\o_1}$ (i.e. if $\b$ is a limit ordinal, then $M_{\b}=\bigcup_{\xi<\b}M_{\xi}$) of countable elementary submodels of $H_{\theta}$ such that $\forall\xi\in\supp(p)\ \ M_{\xi}\in p(\xi)$.
\end{itemize}
We point out that in this case the generic filter in $\cp_c$ is denoted by $\cg_c$ and that the function $G_c$ analogous to $G$ is a total function from $\o_1$ to $H_{\theta}$. Moreover, the poset $\cp_c$ is strongly proper. The proof of this fact needs slight modification of the proof of the Lemma \ref{T1}. Also, the Kurepa tree from the previous paragraph would be obtained in the same way with the poset $\cp_c$. Now we prove that the tree $T$ which we already constructed is an almost Souslin tree in $V[\cg_c]$, i.e. if $X\sse T$ is an antichain, then $L(X)=\set{\gamma<\o_1: X\cap T_{\g}\neq\ps}$ (the level set of $X$) is not stationary in $\o_1$. Hence, to show that $T$ is almost Souslin we have to find a club $\Gamma$ in $\o_1$ such that $\Gamma\cap L(X)=\ps$. So let $\tau\in H_{\theta}$ be a $\cp_c$-name and define the set
\[\textstyle
\Gamma_{\tau}=\set{\g<\o_1: \exists M\in G_c(\g)\ \tau\in M\ \&\ M[\cg_c]\cap\o_1=M\cap\o_1=\g}.
\]

\begin{lemma}[CH]\label{t27}
The set $\Gamma_{\tau}$ is a club in $\o_1$.
\end{lemma}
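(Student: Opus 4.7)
The plan is to show that $\Gamma_\tau$ is both unbounded and closed in $\o_1$; the argument parallels the proof that the strong-properness ordinals form a club, but must additionally exploit the continuity condition (4) of $\cp_c$ to handle limits.

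For unboundedness I will use a density argument via strong properness (the $\cp_c$-analog of Lemma \ref{T1}, which the authors already indicated). Given $\a<\o_1$ and $p\in\cg_c$, I pick a countable $M'\prec H_{(2^\theta)^+}$ with $\tau,\cp_c,p,\a\in M'$ and $\delta:=M'\cap\o_1>\a$, and set $M:=M'\cap H_\theta$. I then argue that $p_{M'}:=p\cup\set{\seq{\delta,\set{M}}}$ lies in $\cp_c$: a continuous chain $\seq{L_\xi:\xi<\o_1}\in M'$ witnessing $p$ has $\bigcup_{\xi<\delta}L_\xi=M$ by the elementarity of $M'$, so continuity at $\delta$ is preserved by declaring $L_\delta=M$ and extending arbitrarily above $\delta$. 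Strong properness makes $p_{M'}$ strongly $(M',\cp_c)$-generic, so Lemma \ref{t13} yields $p_{M'}\f M[\cg_c]\cap\o_1=\delta$, and since $\tau\in M$ this forces $\delta\in\Gamma_\tau$.

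For closedness I take $\gamma_n\nearrow\gamma$ with each $\gamma_n\in\Gamma_\tau$. By density I first arrange $L\in G_c(\xi_0)$ with $\tau\in L$ for some $\xi_0<\gamma_0$. Iterating condition (3) inside $\cg_c$, I build an $\in$-chain $\seq{N_n:n<\o}$ with $N_n\in G_c(\gamma_n)$ and $L\in N_0$, so $\tau\in N_n$ for all $n$ by Lemma \ref{t23}; condition (2) gives $N_n\cap\o_1=\gamma_n$ because a witness $M_n$ of $\gamma_n\in\Gamma_\tau$ already has $M_n\cap\o_1=\gamma_n$. Setting $N:=\bigcup_n N_n$, Tarski--Vaught yields a countable elementary submodel $N\prec H_\theta$ with $\tau\in N$ and $N\cap\o_1=\gamma$. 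To conclude $\gamma\in\Gamma_\tau$ I still need $N\in G_c(\gamma)$ and $N[\cg_c]\cap\o_1=\gamma$: the first from condition (4), since a continuous chain witnessing any $p^*\in\cg_c$ with $\gamma\in\supp(p^*)$ has its $\gamma$-value determined by continuity from the $\gamma_n$-values, which must be isomorphic to the $N_n$-chain; the second from Lemma \ref{t13} applied to $N=M^*\cap H_\theta$ for a countable $M^*\prec H_{(2^\theta)^+}$ built as an increasing union of $M_n^*$ with $M_n^*\cap H_\theta=N_n$.

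The hard part will be producing the required $p^*\in\cg_c$ in the closedness step: since conditions have finite support, no single $p^*$ can simultaneously contain all $\gamma_n$, so the argument must combine genericity across infinitely many conditions with condition (4) to extract a single coherent continuous chain in $V[\cg_c]$ whose $\gamma_n$-values match the $N_n$ up to isomorphism. This is precisely where CH will enter, via a pigeonhole argument analogous to that of Lemma \ref{t15}, counting transitive collapses of the $\aleph_1$ many candidate chain witnesses in order to produce the compatibility required to identify $N$ with the generic chain's value at $\gamma$.
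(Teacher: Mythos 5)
Your unboundedness argument is essentially the paper's own: put $M'\cap H_\theta$ on level $\delta=M'\cap\o_1$, verify membership in $\cp_c$ using a witnessing continuous chain obtained by elementarity, and use genericity of the extended condition to force $\delta\in\Gamma_\tau$. That half is fine. The closedness half, however, has a genuine gap, and it aims at a stronger claim than what is needed. You try to prove that the particular model $N=\bigcup_n N_n$ itself lies in $G_c(\gamma)$, and you admit you cannot produce the condition $p^*$ certifying this; worse, the step you lean on is false: the continuous chain witnessing condition (4) is not part of the condition, is far from unique, and its values at levels outside the finite support of $p^*$ are completely unconstrained, so nothing forces its value at $\gamma$ (or any member of $p^*(\gamma)$) to be isomorphic to, let alone equal to, $\bigcup_n N_n$; no pigeonhole count of transitive collapses supplies such an identification. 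What is actually needed, and what the paper proves, is that \emph{every} $M\in G_c(\gamma)$ with $\tau\in M$ (such $M$ exists because $\tau\in N_{\gamma_0}\in M$ for some $M\in G_c(\gamma)$, and $N_{\gamma_0}\sse M$) already satisfies $M\cap\o_1=\gamma$. The inequality $M\cap\o_1\ge\gamma$ comes from your $\in$-chain of the $N_{\gamma_n}$'s; the inequality $M\cap\o_1\le\gamma$ is the missing idea: a density argument (Claim \ref{t12}) showing that if $M\cap\o_1>\gamma$, then, by continuity of a witnessing chain at the limit $\gamma$, one can densely insert at some level $<\gamma$ a model whose trace on $\o_1$ exceeds $\gamma$, which contradicts the fact that models at the levels $\gamma_n$ have traces $\gamma_n<\gamma$. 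Your proposal contains no substitute for this step.

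The second part of closedness, $M[\cg_c]\cap\o_1=\gamma$, is also not covered by your plan. Applying Lemma \ref{t13} requires writing $N$ as $M^*\cap H_\theta$ for some $M^*\prec H_{(2^\theta)^+}$ with $\cp_c\in M^*$, and there is no reason an arbitrary member of $G_c(\gamma)$ (or your union $N$) has this form; building $M^*$ as a union of models $M_n^*$ with $M_n^*\cap H_\theta=N_n$ is exactly as unjustified. The paper instead uses CH here in a different way: by the $\o_2$-c.c. (Lemma \ref{t15}) any name $\sigma\in M$ for a countable ordinal may be replaced by a nice name of size at most $\aleph_1$; one then finds, as in Claim \ref{t12}, a level $\xi_1<\gamma$ with $\xi_1\in\Gamma_\tau$ and a model $M_{\xi_1}\in G_c(\xi_1)$ containing $\varphi(\sigma)$, where $\varphi:M\izo M_\gamma$ is an isomorphism onto the chain's model at $\gamma$, and Lemma \ref{T2} transfers the evaluation, giving $\int_{\cg_c}(\sigma)=\int_{\cg_c}(\varphi(\sigma))<M_{\xi_1}\cap\o_1<\gamma$. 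So the real role of CH is this nice-name argument combined with the isomorphism lemma, not the collapse-counting compatibility argument you sketch.
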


\begin{proof}
First we prove that $\Gamma_{\tau}$ is unbounded in $\o_1$. Take $\g_1<\o_1$ and assume that there is some $p\in \cg_c$ such that $p\f\forall \check\g\in \dot{\Gamma_{\tau}}\ \check\g<\check\g_1$. Take elementary submodel $M\prec H_{(2^{\theta})^+}$ such that $p,\cp_c,\g_1,\tau\in M$. The condition $p_M\le p$ given by $p_M=p\cup\set{\seq{M\cap\o_1,\set{M\cap H_{\theta}}}}$ is in $\cp_c$. To see this note that because $p\in \cp_c$ there is a continuous chain $\seq{M_{\xi}:\xi<\o_1}$ witnessing that condition (4) is satisfied, so by elementarity of $M$ there is a continuous chain $\seq{N_{\xi}:\xi<\d_M}$ such that $\forall \a\in \supp(p)\ N_{\a}\in p(\a)$ and $\bigcup_{\xi<\d_M}N_{\xi}=M$. The chain $\seq{N'_{\xi}:\xi<\o_1}$ witnessing that $p_M\in \cp_c$ is now recursively given by $\forall \xi<\d\ N'_{\xi}=N_{\xi}$, $N'_{\d}=M\cap H_{\theta}$, for succesor $\a+1>\d$ define $N'_{\a+1}\prec H_{\theta}$ as arbitrary submodel containing $N_{\a}$ while for limit $\a>\d$ define $N'_{\a}=\bigcup_{\xi<\a}N'_{\xi}$. Also, $p_M$ is an $(M,\cp_c)$-generic condition, so we have that $p_M\f \check M[\cg_c]\cap\check\o_1=\check M\cap\check\o_1$ (see \cite[Lemma III 2.6]{proper}). This implies that $p_M\f \check M\cap \check\o_1\in \dot{\Gamma_{\tau}}$, but from $\g_1\in M$ it follows that $p_M\f \check\g_1<\check M\cap \check\o_1$, which is in contradiction with the choice of $p$. So $\Gamma_{\tau}$ is unbounded in $\o_1$.

In order to prove that $\Gamma_{\tau}$ is a club, it is enough to show that for every ordinal $\d$ such that $\d=\sup(\Gamma_{\tau}\cap\d)$ there is some $M\in G_c(\d)$ which satisfies $M[\cg_c]\cap \o_1=M\cap\o_1=\d$. Because $\d=\sup(\Gamma_{\tau}\cap \d)$ there is an increasing sequence $\g_n\in \Gamma_{\tau}\cap \d$ such that $\sup_{n<\o}\g_n=\d$. Pick $N_{\g_0}\in G_c(\g_0)$ such that $\tau\in N_{\g_0}$, hence there is some $M\in G_c(\d)$ such that $\tau\in M$. First we will show that $M\cap \o_1=\d$. Inductively pick models $N_{\g_n}\in G_c(\g_n)$ such that $N_{\g_{n-1}}\in N_{\g_{n}}$. Note that because $\g_n\in \Gamma_{\tau}$, we have $N_{\g_n}\cap \o_1=\g_n$. Now, because $\sup_{n<\o}\g_n=\d$ and $\forall n<\o\ \g_n<M\cap\o_1$ (from $\forall n<\o\ \exists N\in G_c(\d)\ N_{\g_n}\in N$) we have that $M\cap \o_1\ge \d$. So assume that $M\cap \o_1=\b>\d$. Let $p\in \cg_c$ be any condition such that $M\in p(\d)$.

\begin{claim}\label{t12}
The set $D_{\d}=\set{q\le p: \exists \g'<\d\ \exists N\in q(\g')\ N\cap \o_1>\d}$ is dense below $p$ in $\cp_c$.
\end{claim}

\begin{proof}
Take arbitrary $p'\le p$ and pick a continuous $\in$-sequence $\seq{M_{\xi}:\xi<\o_1}$ such that $\forall \xi\in \supp(p')\ M_{\xi}\in p'(\xi)$. Because this chain is continuous, $\d$ is a limit ordinal and $M\in p'(\d)$ is such that $M\cap \o_1=\b>\d$, there is some $\xi_0<\d$ such that $M_{\xi_0}\cap \o_1>\d$. Now pick any $\xi_1\in \d\setminus \max(\d\cap (\supp(p')\cup \set{\xi_0}))$. Clearly $M_{\xi_1}\cap \o_1>\d$. To extend $p'$ to some $q\in D_{\d}$ first denote the ordinal $\a=\max(\supp(p')\cap \d)$. Further, note that for each $N\in p'(\a)$ there is some $M_N\in p'(\d)$ such that $N\in M_N$. Let $\psi_{N}:M_N\izo M_{\d}$ be isomorphism for each $N\in p'(\a)$. Because $\seq{M_{\xi}:\xi<\o_1}$ is a continuous chain and $\d$ is a limit ordinal, there is some $\xi_2<\d$ such that $M_{\xi_1}\in M_{\xi_2}$ and that moreover $\forall N\in p'(\a)\ \psi_{N}(N)\in M_{\xi_2}$. Now define
\[\textstyle
q=p'\cup\set{\seq{\xi_2,\set{M_{\xi_2}}\cup\set{\psi^{-1}_N(M_{\xi_2}):N\in p'(\a)\cap M_{\d}}}}.
\]
It is clear that $q\in \cp$ and the sequence $\seq{M_{\xi}:\xi<\o_1}$ witnesses that $q$ satisfies the property (4), hence $q$ is also in $\cp_c$.
\end{proof}
Now, according to Claim \ref{t12}, there is some $q\in \cg_c\cap D_{\d}$ below $p$. But this implies that there is some $N\in G_c(\g')$ for $\g'<\d$, satisfying $N\cap \o_1>\d$ which is impossible by the choice of $\g_n$ (note that $\g_n$ is cofinal in $\d$ and we would have that there is some $\g_{n'}>\g'$ such that $N_{\g_{n'}}\cap\o_1<N\cap\o_1$ and $N_{\g_{n'}}\in G_c(\g_{n'})$ and $N\in G_c(\g')$). So $M\cap\o_1=\d$.

We still have to prove that also $M[\cg_c]\cap\o_1=\d$. Let $\sigma\in M$ be a $\cp_c$-name for a countable ordinal. Because CH holds, the poset $\cp_c$ is $\o_2$-c.c., Lemma \ref{t15}, so we can assume that $\sigma$ is a nice name of cardinality at most $\o_1$, i.e. $\sigma=\set{\seq{\check\xi,p_{\xi}}:\xi<\o_1}$ where $\set{p_{\xi}:\xi<\o_1}$ is a maximal antichain in $\cp_c$. Now let $p\in \cg_c$ be any condition containing $M$. Because $p\in\cp_c$, there is a continuous chain $\seq{M_{\xi}:\xi<\o_1}$ such that $\forall \xi\in\supp(p)\ M_{\xi}\in p(\xi)$. Now there is an isomorphism $\varphi:M\izo M_{\d}$ and in the same way as in the proof of Claim \ref{t12} we show that there is some $q\in \cg_c$ and $\xi_1<\d$ such that $\varphi(\sigma)\in M_{\xi_1}\in q(\xi_1)\sse G_c(\xi_1)$. Because $\d=\sup(\Gamma_{\tau}\cap \d)$ we can assume that $\xi_1\in \Gamma_{\tau}$. Now according to Lemma \ref{T2} and the form of $\sigma$ and $\varphi(\sigma)$ we have that $\int_{\cg_c}(\sigma)=\int_{\cg_c}(\varphi(\sigma))<M_{\xi_1}[\cg_c]\cap \o_1=M_{\xi_1}\cap \o_1<\d$. Hence $M[\cg_c]\cap \o_1=\d$ and the proof is finished.
\end{proof}

\begin{theorem}[CH]\label{t28}
The tree $T$ is an almost Souslin tree.
\end{theorem}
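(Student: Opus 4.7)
The plan is to fix an arbitrary $\cp_c$-name $\dot X$ for an antichain $X\sse T$ and show that the club $\Gamma_{\dot X}$ supplied by Lemma \ref{t27} is disjoint from the level set $L(X)=\set{\g<\o_1:X\cap T_{\g}\neq\ps}$. Since $\Gamma_{\dot X}$ is a club in $\o_1$, this immediately gives that $L(X)$ is non-stationary, establishing the almost Souslin property.

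Suppose for contradiction that $\g\in\Gamma_{\dot X}\cap L(X)$. By the definition of $\Gamma_{\dot X}$, there is some $M\in G_c(\g)$ with $\dot X\in M$ and $M[\cg_c]\cap\o_1=M\cap\o_1=\g$. Since $\g\in L(X)$, pick $t\in X\cap T_{\g}$. Now apply the $\cp_c$-version of Claim \ref{t5} (which, as noted just before Lemma \ref{t27}, is proved by the same argument as in Section \ref{kurepa} since the Kurepa tree construction transfers verbatim to $\cp_c$) at level $\g$ with the model $M$: this gives $t=f_{\a}\rest\g$ for some $\a\in M\cap \o_2$.

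The crux is then a short reflection argument inside $M[\cg_c]$. First, $X=\int_{\cg_c}(\dot X)\in M[\cg_c]$ by Lemma \ref{t18}. Second, $f_{\a}\in M[\cg_c]$, because a canonical $\cp_c$-name $\dot f_{\a}$ for $f_{\a}$ can be defined in $H_{\theta}$ from the parameters $\a$ and $\cp_c$ (both in $M$) using the well-ordering $<_{\theta}$, so $\dot f_{\a}\in M$ by Lemma \ref{t25} and hence $f_{\a}\in M[\cg_c]$ by Lemma \ref{t18}. Now in $H_{\theta}[\cg_c]$ the formula ``$\exists\d<\o_1\ (f_{\a}\rest\d\in X)$'' holds, witnessed by $\d=\g$. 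By elementarity of $M[\cg_c]\prec H_{\theta}[\cg_c]$ together with $M[\cg_c]\cap \o_1=\g$, there must be some $\d\in\g$ with $f_{\a}\rest\d\in X$. But then $f_{\a}\rest\d$ and $f_{\a}\rest\g$ are two distinct (they have different domains) and comparable members of $X$, contradicting the antichain property.

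The only genuine point of friction I foresee is invoking the $\cp_c$-version of Claim \ref{t5}: its proof uses the closure operation $\cl_{\d}$ from Definition \ref{d2}, and one must verify that closing up a $\cp_c$-condition still produces a $\cp_c$-condition (i.e., that the continuous chain witnessing clause (4) can be preserved or rebuilt). The paper explicitly states that this transfer works, so once it is granted, the rest of the argument is a routine elementarity chase off Lemma \ref{t27}, and neither the matrix structure of $\cp_c$ nor the precise form of the tree enumeration needs to be opened up again.
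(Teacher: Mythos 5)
Your argument is essentially the paper's own proof: fix a name for the antichain, use Lemma \ref{t27} to get the club $\Gamma$, and at any $\g\in\Gamma\cap L(X)$ use the $\cp_c$-version of Claim \ref{t5} to write the node of $X$ at level $\g$ as $f_{\a}\rest\g$ with $\a\in M\cap\o_2$, then reflect into $M[\cg_c]$ (using $M[\cg_c]\prec H_{\theta}[\cg_c]$ and $M[\cg_c]\cap\o_1=\g$) to produce a second, strictly lower element of $X$ on the branch $\mathcal F_{\a}$, contradicting the antichain property. Your phrasing of the final step (reflecting the formula ``$\exists\d<\o_1\,(f_{\a}\rest\d\in X)$'') is the same reflection the paper performs via the set $\mathcal F_{\a}\cap X$.

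The one step you elide is not cosmetic: Lemma \ref{t27} is stated only for names $\tau\in H_{\theta}$, since $\Gamma_{\tau}$ requires $\tau\in M$ for countable $M\prec H_{\theta}$; an arbitrary $\cp_c$-name $\dot X$ for a subset of $T$ need not lie in $H_{\theta}$, in which case $\Gamma_{\dot X}$ is not provided by the lemma at all. The paper's first sentence handles this by invoking the $\o_2$-c.c.\ of $\cp_c$ (Lemma \ref{t15}, which is where the CH hypothesis enters beyond Lemma \ref{t27}) to replace the given name by a nice name of size $\aleph_1$ lying in $H_{\theta}$, and your proof should do the same before appealing to $\Gamma_{\dot X}$. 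A similar caution applies to your parenthetical ``$\cp_c\in M$'' when producing a name for $f_{\a}$ inside $M$: since $\cp_c\not\in H_{\theta}$, this cannot be taken literally, though the paper's own justification (that $\mathcal F_{\a}$ is definable from $\a\in M$ and the generic filter) is of the same informal character, so this is not a divergence from the paper's level of rigor. With the name-reduction step restored, your proof matches the paper's.
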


\begin{proof}
Let $\tau'$ be a $\cp_c$-name for an antichain $X$ in $T$. Because CH holds in $V$, according to Lemma \ref{t15} $\cp_c$ is $\o_2$-c.c. so there is a $\cp_c$-name $\tau$ for $X$ which is in $H_{\theta}$. To prove the theorem, we will show that $L(X)\cap\Gamma_{\tau}=\ps$.

So assume that $X\cap T_{\d}\neq\ps$ for some $\d\in\Gamma_{\tau}$. Because $\d\in \Gamma_{\tau}$ there is some $M\in G_c(\d)$ such that $\tau\in M$ and that $M[\cg_c]\cap\o_1=M\cap\o_1=\d$, so take any $p\in\cg_c$ such that $M\in p(\d)$. Now, in the same way as in the proof of Claim \ref{t5} we know that $p$ forces that $T_{\d}=\set{f_{\a}\rest\d:\a\in M\cap \o_2}$, hence there is some $\a\in M\cap\o_2$ such that $f_{\a}\rest\d\in X$. Consider the branch $\mathcal F_{\a}$. It is defined solely from $\a\in M$ and $\cg_c$, so $\mathcal F_{\a}\in M[\cg_c]$. Also, because $\tau\in M$ we have that $X\in M[\cg_c]$. Consequently $\mathcal F_{\a}\cap X\in M[\cg_c]$, and from the fact that $X$ is an antichain it follows that this intersection is singleton (i.e. $f_{\a}\rest\d$). But, because $M[\cg_c]\prec H_{\theta}^{V[\cg_c]}$ (see Lemma \ref{t18}) and the height of $f_{\a}\rest\d$ is less than $\o_1$, there must exist some element $t\in \mathcal F_{\a}\cap X\cap M[\cg_c]$ which is of height less then $\d=M[\cg_c]\cap\o_1$. But then $t<f_{\a}\rest\d$ and both $t,f_{\a}\rest\d\in X$ which is in contradiction with the fact that $X$ is an antichain.
\end{proof}

\section{Concluding remarks}

We have seen that both versions of the matrix posets force the Continuum Hypothesis. It turns out that a bit more is true, both versions of the matrix poset force the combinatorial principle $\Diamond$ independently of the status of the Continuum Hypothesis in the ground model. In fact, if CH fails in $V$ that $\Diamond$ is forced follows from a slight adaptation of a result of Roslanowski and Shelah \cite{roslanowski} to our matrix posets that do not have cardinality $2^{\aleph_0}$ but are neverltheles $2^{\aleph_0}$-centered in the canonical way. So we may concentrate on the case that the ground model $V$ satisfies CH. If CH holds in $V$, then the following theorem from \cite{note2} proves a bit more for the continuous version of the matrix forcing (poset $\mathcal{P}_c$ of Section \ref{continuousmatrix}).

\begin{theorem}[CH]\label{t40}
$\Diamond^+$ holds in $V[\cg_c]$.
\end{theorem}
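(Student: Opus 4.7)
Under CH the poset $\cp_c$ is $\o_2$-c.c.\ by Lemma~\ref{t15}, so every $X\sse\o_1$ in $V[\cg_c]$ has a $\cp_c$-name in $H_\theta$. The plan is to extract the $\Diamond^+$-sequence directly from the generic chain of elementary submodels provided by $G_c$. Applying Lemma~\ref{t27} with $\tau=\cp_c$, the set
\[
\Gamma=\set{\alpha<\o_1:\exists M\in G_c(\alpha)\ \ \cp_c\in M\ \&\ M[\cg_c]\cap\o_1=M\cap\o_1=\alpha}
\]
is a club in $\o_1$. For each $\alpha\in\Gamma$ fix one such $M_\alpha$ and set
\[
S_\alpha=\set{Y\cap\alpha:Y\in M_\alpha[\cg_c],\ Y\sse\o_1};
\]
put $S_\alpha=\emptyset$ for $\alpha\notin\Gamma$. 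Each $S_\alpha\sse\cp(\alpha)$ is countable because $M_\alpha$ and hence $M_\alpha[\cg_c]$ (Lemma~\ref{t18}) are countable.

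To verify $\Diamond^+$, take any $X\sse\o_1$ in $V[\cg_c]$ with name $\tau\in H_\theta$ and let $C=\Gamma_\tau\cap\Gamma$, a club by two applications of Lemma~\ref{t27}. For every $\alpha\in C$ there is an $M\in G_c(\alpha)$ containing both $\tau$ and $\cp_c$ with $M[\cg_c]\cap\o_1=M\cap\o_1=\alpha$; since $\tau\in M$, we have $X=\int_{\cg_c}(\tau)\in M[\cg_c]$, so $X\cap\alpha\in S_\alpha$ (invoking the independence of $S_\alpha$ from the choice of $M_\alpha$ discussed below). Moreover the names $\dot{\Gamma_\tau}$ and $\dot\Gamma$ are definable from $\tau,\cp_c\in M$, so by Lemma~\ref{t25} they both lie in $M$, giving $\Gamma_\tau,\Gamma\in M[\cg_c]$ and hence $C\cap\alpha\in M[\cg_c]$, yielding $C\cap\alpha\in S_\alpha$. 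This is $\Diamond^+$.

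The main obstacle is the independence of $S_\alpha$ from the choice of $M_\alpha\in G_c(\alpha)$: given any two isomorphic $M^1,M^2\in G_c(\alpha)$ with isomorphism $\varphi$ (which is necessarily the identity on the ordinal $\alpha=M^i\cap\o_1$), one must show $\int_{\cg_c}(\tau)\cap\alpha=\int_{\cg_c}(\varphi(\tau))\cap\alpha$ for every name $\tau\in M^1$ for a subset of $\o_1$. The key ingredient is a mild strengthening of Lemma~\ref{T2}: any condition $q\in\cg_c$ with $\set{M^1,M^2}\sse q(\alpha)$ forces $\varphi[\cg_c\cap M^1]=\cg_c\cap M^2$. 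The proof is the same as Lemma~\ref{T2}, since the argument there only uses that $M,M'\in q(\delta)$ rather than the specific form $p_{MM'}$ of the starting condition. From this, together with the elementarity $M^i[\cg_c]\prec H_\theta[\cg_c]$ (Lemma~\ref{t18}), the equality $\varphi(\check\beta)=\check\beta$ for $\beta<\alpha$, and the preservation of forcing statements under $\varphi$, the desired equality follows by a routine computation: $\beta\in\int_{\cg_c}(\tau)$ iff some $p\in\cg_c\cap M^1\cap\cp_c$ forces $\check\beta\in\tau$, iff $\varphi(p)\in\cg_c\cap M^2\cap\cp_c$ forces $\check\beta\in\varphi(\tau)$, iff $\beta\in\int_{\cg_c}(\varphi(\tau))$.
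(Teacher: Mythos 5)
Your construction founders at exactly the point you flag as the ``main obstacle,'' and the patch you offer does not close it. The final ``routine computation'' begins with ``$\b\in\int_{\cg_c}(\tau)$ iff some $p\in\cg_c\cap M^1$ forces $\check\b\in\tau$,'' but the forward direction is unjustified: a name $\tau\in M^1$ for a subset of $\o_1$ is in general uncountable, so $\tau\not\sse M^1$ and its evaluation is not determined by $\cg_c\cap M^1$ at all. To reduce the evaluation to $\cg_c\cap M^1$ you would need $\cg_c$ to decide, by conditions lying in $M^1$, the statements $\check\b\in\tau$; this is a genericity property over $M^1$ which arbitrary entries $M^1\in G_c(\a)$ simply need not have. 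Such models are only elementary in $H_\theta$, which cannot express the $\cp_c$-forcing relation, so no reflection or density argument can be run inside $M^1$; the results that would give what you need (Lemma \ref{T1}, Lemma \ref{t13}, and the reflection step in Claim \ref{t3}) all require $M^1$ to be of the form $M'\cap H_\theta$ for some $M'\prec H_{(2^\theta)^+}$ containing $\tau$ and $\cp_c$, and neither the witness for $\a\in\Gamma_\tau$ nor your chosen $M_\a$ is guaranteed to be of this form. The strengthened Lemma \ref{T2}, even granting it, only gives $\varphi[\cg_c\cap M^1]=\cg_c\cap M^2$ and hence says nothing about evaluations of names not determined by these traces. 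Consequently ``$X\cap\a\in S_\a$'' is not established when the model containing $\tau$ differs from $M_\a$. The clause $C\cap\a\in S_\a$ --- the distinctive ``plus'' part of $\Diamond^+$ --- is in even worse shape: you invoke Lemma \ref{t25} to put names for $\Gamma_\tau$ and $\Gamma$ into $M$, but that lemma concerns definability over $H_\theta$, while any name for $\Gamma_\tau$ is defined using the forcing relation and so is only definable over $H_{(2^\theta)^+}$; and even with such a name in $M$ you would again face the untransferred passage from $M[\cg_c]$ to $M_\a[\cg_c]$. (A minor slip: Lemma \ref{t27} requires $\tau\in H_\theta$, so it cannot literally be applied with $\tau=\cp_c$; any fixed $\tau$ would do for defining $\Gamma$.)

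It is instructive to compare with the paper's proof, which deliberately avoids reading the guessing sets off the generic models. There one shows that the generic club $\Delta$ is almost contained in every ground-model club (Claim \ref{t41}), uses CH in $V$ to capture every ground-model subset of $\o_1$ cofinally (Claim \ref{t43}), and then sets $W_\a=P(\a)\cap\bigcup_{X\in S_\a}L_{\a+2}[X,\Delta\cap\a]$. Given $A\sse\o_1$ in $V[\cg_c]$, the $\o_2$-c.c.\ provides a ground-model code $X\sse\o_1$ of a name, so $A\in L[X,\Delta]$, and the club $B$ of condensation points of $L_{\o_1}[X,\Delta]$ satisfies $A\cap\a,B\cap\a\in W_\a$ for $\a\in B$ by condensation. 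In other words, the ``plus'' requirement is met by making the guessing sets constructibly closed rather than by trying to show that the level sets of the generic matrix already guess everything --- which is precisely the step your argument leaves unproved.
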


For the convenience of the reader we include the sketch of the proof.

\begin{proof}
First denote the generic club by $\Delta$, i.e. $\Delta$ is the club contained in the set $\tr(\cg_c)=\set{M\cap \o_1:M\in \bigcup_{p\in \cg_c}\ran(p)}$. The key part of the proof is the following claim which shows that the generic club is almost contained in every club from the ground model.

\begin{claim}\label{t41}
Let $C\sse \o_1$ be a club in $V$. Then there is a countable ordinal $\d$ such that for every $\b\ge \d$ if $\b\in\Delta$ then $\b\in C$.
\end{claim}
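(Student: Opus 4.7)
The plan is to establish the claim by a density argument: for every $p\in\cp_c$ and every club $C\in V$, I will exhibit an extension of $p$ that forces $\dot\Delta\setminus\check\delta\sse\check C$ for some countable ordinal $\delta$.

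Fix such a $p$ and choose a countable elementary submodel $M\prec H_{(2^\theta)^+}$ containing $p,\cp_c,C$, and set $\delta=M\cap\omega_1$. Since $C\in M$ is a club in $\omega_1$, the set $C\cap M$ is unbounded in $\delta$, and closedness of $C$ gives $\delta\in C$. The candidate extension is the strongly generic condition
\[
p_M = p\cup\{\seq{\delta,\set{M\cap H_\theta}}\}.
\]
Exactly as in the proof of Lemma~\ref{t27}, $p_M\in\cp_c$: the continuous $\in$-chain witnessing property~(4) for $p_M$ is assembled, via elementarity of $M$, from the chain witnessing~(4) for $p$, installing $M\cap H_\theta$ at level $\delta$ and extending continuously above.

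I then claim $p_M\Vdash\dot\Delta\setminus\check\delta\sse\check C$. Suppose toward a contradiction that $q\le p_M$ and $\beta>\delta$ satisfy $q\Vdash\check\beta\in\dot\Delta\setminus\check C$. Since $\dot\Delta\sse\tr(\dot\cg_c)$, a density argument lets me strengthen $q$ so that $\beta\in\supp(q)$ with trace $\delta^q_\beta=\beta$. Because $q\le p_M$, we have $M\cap H_\theta\in q(\delta)$, and applying condition~(3) of Definition~\ref{d1} to the pair $\delta<\beta$ yields some $N\in q(\beta)$ with $M\cap H_\theta\in N$. Since $C\sse\omega_1\sse H_\theta$ and $C\in M$, we get $C\in M\cap H_\theta\in N$, hence $C\in N$. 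Now $N\prec H_\theta$ is countable with $N\cap\omega_1=\beta$, and $C$ is an uncountable element of $N$; by Lemma~\ref{t24}, $C\cap N$ is unbounded in $\beta$, and closedness of $C$ forces $\beta\in C$. This contradicts $q\Vdash\check\beta\notin\check C$.

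The only nontrivial obstacle is the verification $p_M\in\cp_c$, which requires arranging the continuous chain for $p_M$ to pass through $M\cap H_\theta$ at the specific level $\delta=M\cap\omega_1$; this is exactly the technical point already invoked (without detailed verification) in the proof of Lemma~\ref{t27}. Once granted, the rest reduces to a clean reading of condition~(3): every model appearing at a level above $\delta$ in an extension of $p_M$ must contain $M\cap H_\theta$, hence the club $C$, which pins its $\omega_1$-trace inside $C$.
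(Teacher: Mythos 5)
Your strategy is essentially the paper's own argument in forcing-language dress: the paper works directly in $V[\cg_c]$, using genericity to find a level $\a$ and a model $M\in G_c(\a)$ with $C\in M$, and then pushes $C$ upward via condition (3) of Definition \ref{d1} into a model whose trace is the given $\b\in\Delta$, concluding $\b\in C$ by elementarity and closedness; your density argument with $p_M$ has the same content. However, one step as written is not merely unjustified but unavailable in general: you cannot always strengthen $q$ so that $\b\in\supp(q)$ with $\delta^q_\b=\b$. In $\cp_c$ the trace of the row at level $\xi$ is always $\ge\xi$ (any continuous $\in$-chain satisfies $M_\xi\cap\o_1\ge\xi$), so the witness for $\b\in\tr(\dot\cg_c)$ sits at some level $\g\le\b$; if $q$ already carries a model of trace $\b$ at a level $\g<\b$, then by (3) every model appearing at level $\b$ in any extension has trace strictly greater than $\b$, so no extension can realize your requirement that the trace at level $\b$ be $\b$.

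The repair is immediate and lands you exactly on the paper's proof: extend $q$ to some $r$ containing a model $N'$ with $N'\cap\o_1=\b$ at whatever level $\g$ it occurs. Since within a condition the traces are strictly increasing along the support and the trace at level $\d$ is $\d<\b$ (because $M\cap H_\theta\in r(\d)$ and, by (2), all models of a row are isomorphic), necessarily $\g>\d$. Apply (3) to the pair $\d<\g$ to get $N\in r(\g)$ with $M\cap H_\theta\in N$; by (2) again $N\cong N'$, so $N\cap\o_1=\b$, and since $M\cap H_\theta$ is a countable element of $N$, Lemma \ref{t23} gives $M\cap H_\theta\sse N$, hence $C\in N$. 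Your concluding use of Lemma \ref{t24} and the closedness of $C$ then yields $\b\in C$, contradicting $q\f\check\b\notin\check C$. With this substitution (and keeping your observation that $\d\in C$ to cover $\b=\d$), the proof is correct and coincides in substance with the paper's.
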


\begin{proof}
Because $C\in V$ there is some $\a<\o_1$ such that $\exists M\in G_c(\a)\ C\in M$. Denote $\d=M\cap \o_1$. By elementarity of $M$ we know that $C$ is a club in $\d$, so $\d\in C$. Now take arbitrary $\b>\d$ such that $\b\in \Delta$. This means that for some $\g<\o_1$ ($\a<\g$) there is some $N'\in G_c(\g)$ such that $N'\cap \o_1=\b$. Also, by the definition of $\cp_c$ there is some $N\in G_c(\g)$ such that $C\in N$. Now by elementarity of $N$ we conclude that $C$ is a club in $\b=N\cap \o_1$ so $\b\in C$.
\end{proof}

\begin{claim}\label{t43}
There is, in $V[\cg_c]$, a sequence $\seq{S_{\a}:\a<\o_1}$ such that for every $\a<\o_1$ we have $S_{\a}\in [P(\a)]^{\le \o}$ and that
\[\textstyle
\forall X\in P(\o_1)\cap V\ \exists \g<\o_1\ \forall \a\ge \g\ X\cap \a\in S_{\a}.
\]\end{claim}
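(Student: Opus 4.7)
The natural approach is to read off the guessing sequence from the generic models themselves. For each $\a\in\Delta$ I would let $S_{\a}\sse P(\a)$ consist of all sets of the form $\pi_N(Y)=Y\cap\a$, where $Y$ ranges over $M\cap P(\o_1)$ for $M$ a countable submodel appearing in some condition of $\cg_c$ with $M\cap\o_1=\a$. For $\a\notin\Delta$ I would then set $S_{\a}=\set{Y\cap\a : Y\in S_{\a^+}}$, where $\a^+=\min(\Delta\setminus\a)$; this is automatically countable, and it reduces the problem to points of $\Delta$.

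To verify that $S_{\a}$ is genuinely a countable subset of $P(\a)$ for $\a\in\Delta$, first observe that for any fixed $M$ with $M\cap\o_1=\a$ the set $\set{\pi_M(Y):Y\in M\cap P(\o_1)}$ is a countable subset of the countable transitive collapse $\overline M$. The delicate point is that different generic models at the same trace $\a$ need not a priori share a single transitive collapse, so one must show that in $V[\cg_c]$ only countably many such collapses occur at each trace; here I would use condition (4) (continuity of the chains witnessing each condition in $\cp_c$) together with Claim \ref{t41}, exploiting the fact that any two generic conditions are compatible and that their joint chain yields a well-controlled isomorphism type at each trace.

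For the guessing property, given $X\in V\cap P(\o_1)$, a standard density argument (using strong properness of $\cp_c$ and Lemma \ref{t20}) produces $p_0\in\cg_c$, an ordinal $\xi_0<\o_1$, and a model $M_0\in p_0(\xi_0)$ with $X\in M_0$. Set $\gamma_X := M_0\cap\o_1+1$. For $\a\in\Delta$ with $\a\ge\gamma_X$, the definition of $\Delta$ supplies some $\gamma$ and some $N\in p(\gamma)$, for a condition $p\in\cg_c$, with $N\cap\o_1=\a$. Taking a common refinement of $p$ and $p_0$ inside $\cg_c$ and using the trace-monotonicity enforced by condition (3) of Definition \ref{d1} places $M_0\in N$; Lemma \ref{t23} then gives $M_0\sse N$, so $X\in N$, and therefore $X\cap\a=\pi_N(X)\in S_{\a}$. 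For $\a\ge\gamma_X$ with $\a\notin\Delta$ we have $\a^+\in\Delta$ and $\a^+\ge\a\ge\gamma_X$, so $X\cap\a^+\in S_{\a^+}$ by the previous case, whence $X\cap\a=(X\cap\a^+)\cap\a\in S_{\a}$ by construction.

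The step most deserving of care is verifying that $S_{\a}$ remains countable for $\a\in\Delta$: controlling the number of non-isomorphic countable submodels with trace $\a$ that may appear in the generic is the essential use of the continuous version of the poset, and it is precisely where Claim \ref{t41} and condition (4) of $\cp_c$ do their real work. The rest of the argument is a routine density-plus-trace-monotonicity calculation, and the pull-back to $\a\notin\Delta$ is cosmetic.
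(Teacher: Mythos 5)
Your construction is correct, but it takes a genuinely different route from the paper's. The paper never looks at the generic models at this point: using CH in $V$ it fixes an increasing continuous sequence $\seq{D_{\a}:\a<\o_1}$ of countable subsets of $P(\a)$ with $\bigcup_{\a<\o_1}D_{\a}=[\o_1]^{\le\o}$, puts $f(\a)=\min(\Delta\setminus(\a+1))$ and $S_{\a}=\set{X\cap\a:X\in D_{f(\a)}}$, and verifies the guessing property by combining a Devlin-style club $E$ (on which $X$ is captured by the $D_{\a}$'s) with Claim \ref{t41}, which gives $\Delta\setminus\g\sse E$. You instead read the $S_{\a}$ off the transitive collapses of the models occurring in $\cg_c$ and verify guessing by a direct density argument; this dispenses with both the ground-model CH-enumeration and Claim \ref{t41}, so your version of the claim does not in fact use CH, while the paper's argument is shorter given that Claim \ref{t41} is proved anyway and CH is assumed throughout the theorem. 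Two of your steps should be tightened, though neither is a real obstruction. First, countability of $S_{\a}$ for $\a\in\Delta$ is easier than you suggest: by condition (2) all models in one row of a condition are isomorphic, and isomorphic wellfounded extensional structures have the same transitive collapse; by condition (3) and compatibility in the filter the traces of the rows are strictly increasing with the row index, so a given trace $\a$ occurs in exactly one row and all generic models with trace $\a$ share one collapse $\overline M$, whence $S_{\a}\sse P(\a)\cap\overline M$ is countable. Neither condition (4) nor Claim \ref{t41} is what does the work here. Second, in the guessing argument a common refinement $r\le p,p_0$ only yields, via condition (3), some $N'\in r(\g)$ with $M_0\in N'$; there is no reason why $N'$ should be the particular $N$ you fixed, so the phrase ``places $M_0\in N$'' is inaccurate. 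This is harmless exactly because your $S_{\a}$ ranges over all generic models of trace $\a$: $N'$ is isomorphic to $N$, hence $N'\cap\o_1=\a$, and $X\in M_0\sse N'$ by Lemma \ref{t23}, so $X\cap\a\in S_{\a}$ via $N'$. Finally, the density step that puts $X$ into a model of the generic matrix is the extension argument from the proof of Lemma \ref{t27} (one must check that the extended condition still has a continuous witnessing chain), rather than an appeal to strong properness or Lemma \ref{t20}.
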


\begin{proof}
By CH in $V$ we can find an increasing continuous sequence of countable sets $\seq{D_{\a}:\a<\o_1}$ such that $D_{\a}\sse P(\a)$ and $\bigcup_{\a<\o_1}D_{\a}=[\o_1]^{\le\o}$. Let $f:\o_1\to \o_1$ be defined by $f(\a)=\min(\Delta\setminus (\a+1))$. Finally, for $\a<\o_1$ we let $S_{\a}=\set{X\cap \a:X\in D_{f(\a)}}$. To see that the sets $S_{\a}$ ($\a<\o_1$) satisfy the statement of the claim pick any $X\sse \o_1$ which is in $V$. As in the proof of Lemma 2.1 in \cite{devlin} there is a club $E\sse \o_1$ which is in $V$ such that $\forall \a\in E\ \forall \b<\a\ X\cap\b\in D_{\a}$. Now according to Claim \ref{t41} there is some $\g<\o_1$ such that $\Delta\setminus \g\sse E$. For $\g\le \a<\o_1$ it holds $f(\a)\in \Delta\setminus\a\sse\Delta\setminus\g\sse E$ so as $f(\a)>\a$, the choice of $E$ ensures that $X\cap \a\in D_{f(\a)}$. So the claim is proved.
\end{proof}

Let $\seq{S_{\a}:\a<\o_1}$ be the sequence from the previous claim. For $\a<\o_1$ let $W_{\a}=P(\a)\cap (\bigcup_{X\in S_{\a}}L_{\a+2}[X,\Delta\cap \a])$. Then $\seq{W_{\a}:\a<\o_1}$ is a $\Diamond^+$ sequence. To show this pick arbitrary $A\sse \o_1$. Because $\cp_c$ is $\o_2$-c.c. there is a name for $A$ which is coded by some $X\sse \o_1$. Hence, $A\in L[X,\Delta]$. By Claim \ref{t43} there is a $\g<\o_1$ such that $\forall \a\ge \g\ X\cap \a\in S_{\a}$. By induction we define a normal sequence $\seq{\a_{\xi}:\xi<\o_1}$ in $\o_1$. Let $\a_0>\g$ be the least $\a$ such that $L_{\a}[X\cap \a,\Delta\cap \a]\prec L_{\o_1}[X,\Delta]$. If $\a_{\xi}$ is defined let $\a_{\xi+1}>\a_{\xi}$ be the least ordinal such that $L_{\a_{\xi+1}}[X\cap \a_{\xi+1},\Delta\cap \a_{\xi+1}]\prec L_{\o_1}[X,\Delta]$. Let $B=\seq{\a_{\xi}:\xi<\o_1}$. Then it is easily checked by the construction that $B$ is a club in $\o_1$. So pick arbitrary $\a\in B$ (we will prove that $A\cap \a, B\cap \a\in W_{\a}$). Because $\a>\g$ we have $X\cap \a\in S_{\a}$, so $P(\a)\cap L_{\a+2}[X\cap \a,\Delta\cap \a]\sse W_{\a}$. Because $L_{\a}[X\cap \a,\Delta\cap \a]\prec L_{\o_1}[X,\Delta]$ we have that $A\cap \a$ is first-order definable over $L_{\a}[X\cap \a,\Delta\cap \a]$ so $A\cap \a\in L_{\a+1}[X\cap \a,\Delta\cap \a]\sse W_{\a}$. Similarly we would show that $B\cap \a\in W_{\a}$ and the theorem is proved.
\end{proof}

It is clear that this proof adapts to showing that the original matrix poset (poset $\mathcal{P}$ of Section \ref{matrix}) also forces $\Diamond.$

\footnotesize

\end{document}